\def\lmax{\lambda_{\max}}
\title[Lower resolvent bounds and Lyapunov exponents]%
{Lower resolvent bounds\\ and Lyapunov exponents}
\author{Semyon Dyatlov}
\email{dyatlov@math.mit.edu}
\address{Department of Mathematics, Massachusetts Institute of Technology,
77 Massachusetts Ave, Cambridge, MA 02139}
\author{Alden Waters}
\email{alden.waters@ucl.ac.uk}
\address{Department of Mathematics, University College London, Gower Street,
London, WC1E 6BT, United Kingdom}
\thanks{The authors are grateful to Maciej Zworski for several
useful discussions regarding this project,
and to an anonymous referee for many suggestions used to improve the manuscript.
This work was completed during the time S.D. served as a Clay Research Fellow.
A.W.~acknowledges support by EPSRC grant EP/L01937X/1 and ERC Advanced Grant MULTIMOD 26718.}
\begin{document}

\begin{abstract}
We prove a new polynomial lower bound on the scattering resolvent.
For that, we construct a quasimode localized on a trajectory $\gamma$
which is trapped in the past, but not in the future.
The power in the bound is expressed in terms of the maximal Lyapunov exponent
on $\gamma$, and gives the minimal number of derivatives
lost in exponential decay of solutions to the wave equation.
\end{abstract}

\maketitle

\addtocounter{section}{1}
\addcontentsline{toc}{section}{1. Introduction}

In this paper, we study lower bounds on the scattering resolvent
in the lower half-plane. To fix the concepts,
we consider the semiclassical Schr\"odinger operator
\begin{equation}
  \label{e:p-h}
P_h=-h^2\Delta_g+V(x),\quad
V\in C_0^\infty(M;\mathbb R),
\end{equation}
where $(M,g)$ is a Riemannian manifold which is isometric
to $\mathbb R^n$ with the Euclidean metric outside of a compact
set, and $n$ is odd. See~\S\ref{s:infinities}
for other possible settings.

The scattering resolvent is the meromorphic continuation
of the $L^2$ resolvent
$$
R_h(\omega)=(P_h-\omega^2)^{-1}:L^2(M)\to L^2(M),\quad
\Im\omega>0,
$$
as a family of operators
$$
R_h(\omega):L^2_{\comp}(M)\to L^2_{\loc}(M),\quad
\omega\in\mathbb C.
$$
See for instance~\cite[\S3.2]{dizzy} for the case when $g$ is the Euclidean metric
and~\cite[\S4.3, Example~1]{dizzy} for the general case.

We study the $h$-dependence of the norm of $R_h(\omega)$ where
\begin{equation}
  \label{e:omega}
\omega:=\sqrt{E}-ih\nu,\quad E,\nu>0,\ h\to 0.
\end{equation}
We consider the Hamiltonian flow $e^{tH_p}$ of the semiclassical
principal symbol of $P_h$,
\begin{equation}
  \label{e:the-p}
p(x,\xi)=|\xi|_g^2+V(x),\quad
(x,\xi)\in T^*M,
\end{equation}
and make the following assumptions:
\begin{enumerate}
\item $E$ is a regular value for $p$;
that is,
\begin{equation}
  \label{e:regval}
dp\neq 0\quad\text{on }p^{-1}(E);
\end{equation}
\item there exists a trajectory
\begin{equation}
  \label{e:gamma}
\gamma(t)=(x(t),\xi(t))=e^{tH_p}(x_0,\xi_0)\ \subset\ p^{-1}(E)
\end{equation}
which is trapped in the past but not in the future; that is,
$x(t)$ stays in a compact subset of $M$
for $t\leq 0$, but $x(t)\to\infty$ as $t\to +\infty$.
\end{enumerate}
Our main result is
\begin{theo}
  \label{t:main}
Fix $E,\nu>0$ and assume that the conditions~(1), (2) above hold. 
Let $\lmax$ be the maximal Lyapunov exponent of $e^{tH_p}$ along $\gamma$, defined as follows:
\begin{equation}
  \label{e:lmax}
\lmax:=\inf\{\lambda>0\mid \exists C_\lambda>0:\
\forall s\leq 0,\ t\leq -s:\
\|de^{tH_p}(\gamma(s))\|\leq C_\lambda e^{\lambda|t|} \}.
\end{equation}
Let $\beta>0$ satisfy
\begin{equation}
  \label{e:beta-cond}
\lmax\cdot\beta<1.
\end{equation}
Then there exist $\chi_1,\chi_2\in C_0^\infty(M)$ and
$c_\beta>0$ such that for all $h\in (0,1)$,
\begin{equation}
  \label{e:main}
\|\chi_1 R_h(E-ih\nu)\chi_2\|_{L^2\to L^2}\geq c_\beta h^{-1-2\sqrt E\beta\nu}.
\end{equation}
\end{theo}
\Remarks
(i) Using a result of Bony--Petkov~\cite[Theorem~1.2]{BonyPetkov},
we see that~\eqref{e:main} implies the resolvent estimate
$$
\|\indic_{M_{a,b}}R_h(E-ih\nu)\indic_{M_{a,b}}\|_{L^2\to L^2}
\geq c_{\beta,a,b} h^{-1-2\sqrt E\beta\nu}
$$
for all $a<b$ large enough,
where $M_{a,b}:=\{x\in\mathbb R^n\mid a< |x|< b\}$.

\noindent (ii) For the case $\nu=0$,
a logarithmic resolvent lower bound
has been established for general trapping situations
by Bony--Burq--Ramond~\cite{BBR}. For elliptic (stable) trapped sets, there
is a well-known exponential
lower bound, see for instance Nakamura--Stefanov--Zworski~\cite{nsz},
Christianson~\cite[Theorem~7]{hans5},
Datchev--Dyatlov--Zworski \cite{resolve}, and the references given there.
For stretched products and surfaces of revolution,
polynomial lower bounds were proved by Christianson--Wunsch~\cite{hans1}
and Christianson--Metcalfe~\cite{hans2}.

\subsection{Application to the wave equation}
  \label{s:wave}
  
To present the application of our result in the simplest setting, let $V\equiv 0$;
then
$$
R_h(\omega)=h^{-2}R_g(\omega/h),
$$
where $R_g(z)$ is the meromorphic continuation of the resolvent
$$
R_g(z)=(-\Delta_g-z^2)^{-1}:L^2(M)\to L^2(M),\quad
\Im z>0.
$$
The estimate~\eqref{e:main} can then be rewritten as
$$
\|\chi_1 R_g(z)\chi_2\|_{L^2\to L^2}\geq c_\beta |z|^{-1+2\sqrt{E}\beta\nu},\quad
|\Re z|>1,\
\Im z=-\nu.
$$
Consider a solution $u\in C^\infty(\mathbb R_t\times M_x)$ to the inhomogeneous wave equation
\begin{equation}
  \label{e:wave-eq}
\begin{gathered}
\partial_t^2 u-\Delta_g u=f\in C_0^\infty(\mathbb R\times M);\\
u=0\quad\text{for }-t\gg 1,
\end{gathered}
\end{equation}
where $\Delta_g$ is the Laplace--Beltrami operator associated to the metric $g$.

Take the Fourier transform in time
\begin{equation}
  \label{e:ftv}
\hat u(z):=\int_0^\infty e^{izt}u(t)\,dt\in C^\infty(M),\quad\Im z>0,
\end{equation}
where the integral converges in every Sobolev space on $M$ by the standard energy estimates for the wave equation.
Taking the Fourier transform of~\eqref{e:wave-eq}, we see that
$$
\hat u(z)=R_g(z)\hat f(z),\quad \Im z>0,
$$
and thus by Fourier inversion formula
\begin{equation}
  \label{e:finv}
u(t)={1\over 2\pi}\int_{\Im z=1}e^{-izt} R_g(z)\hat f(z)\,dz.
\end{equation}
Deforming the contour in~\eqref{e:finv} to $\{\Im z=-\nu\}$,
$\nu>0$
(see for instance~\cite[Proposition~2.1]{xpd}
or Christianson~\cite{hans3,hans4} for details),
we see that an \emph{upper resolvent bound}
\begin{equation}
  \label{e:urb}
\|\chi_1 R_g(z)\chi_2\|_{L^2\to L^2}\leq C(1+|z|)^{s-1},\quad
\Im z\in [-\nu,1],
\end{equation}
where $s\geq 0$ and $\chi_2\in C_0^\infty(M)$ is equal to 1 near $\supp f$,
implies an exponential energy decay estimate for $u$:
\begin{equation}
  \label{e:xpd}
\|e^{\nu t}\chi_1(x)u\|_{H^1_{t,x}}\leq C\|e^{\nu t}f\|_{H^s_{t,x}}.
\end{equation}
We note that the exponent $s$ in the estimate~\eqref{e:urb} gives
the number of derivatives lost in the exponential decay bound~\eqref{e:xpd},
compared to the local in time estimate which has $s=0$.
In control theory, $s$ is called the \emph{cost} of the decay
estimate.

A classical result of Ralston~\cite{Ralston} states that
a no-cost local energy decay estimate (which is similar to~\eqref{e:xpd}
with $s=0$) cannot hold when the flow $e^{tH_p}$ has trapped trajectories.
We make this result quantitative, providing a lower bound on the
cost depending on the rate of exponential decay and a local Lyapunov exponent:
\begin{theo}
  \label{t:marketing}
Under the assumptions of Theorem~\ref{t:main},
suppose that the exponential decay estimate~\eqref{e:xpd} holds for some~$\nu>0$, $s$, and all
$u$ satisfying~\eqref{e:wave-eq}, where the constant $C$
is allowed to depend on the support of $f$ in $x$. Then $\lmax>0$ and $s\geq\lmax^{-1}$.
\end{theo}
To see Theorem~\ref{t:marketing}, assume that~\eqref{e:xpd} holds for some $\nu$;
then the integral in~\eqref{e:ftv} is well-defined
for $\Im z\geq -\nu$ and~\eqref{e:urb} holds.
(To pass from the resulting semiclassical Sobolev spaces to $L^2$,
we may argue as in the proof of~\cite[Proposition~2.1]{xpd}.)
It remains to apply Theorem~\ref{t:main}.

In the related setting of damped wave equations,
the idea of using resolvent estimates to examine energy decay has a long history~--
see Lebeau~\cite{L}, Burq--G\'erard~\cite{BG}, and Lebeau--Robbiano~\cite{LR}.
Fourier transforming the time variables to reduce the problem to semi-classical one is a common method of examing the equation; see for example, Bouclet--Royer~\cite{RJM}, Burq--Zuily~\cite{BZu},
L\'eautaud--Lerner~\cite{LL},
and Burq--Zworski~\cite{BZ}.
In particular, lower resolvent bounds can similarly
be used to indicate the minimal cost of exponential decay;
for the special case of a single undamped hyperbolic trajectory,
see Burq--Christianson~\cite{Burq-Hans}.
For an abstract approach to the relation between
decay estimates and resolvent estimates,
see Borichev--Tomilov~\cite{Tomilov}
and references given there.

\subsection{Example: surfaces of revolution}
  \label{s:infinities}
  
Theorem~\ref{t:main} is formulated for Schr\"odinger operators on Riemannian
manifolds which are isometric to the Euclidean space outside of a compact set.
However, it applies to much more general situations. In fact, the proof
only requires existence of a meromorphic continuation $R_h(\omega)$
which is semiclassically outgoing (more precisely, the free
resolvent $R_h^0$ in the proof of Lemma~\ref{l:onger} has to be replaced
by a semiclasically outgoing parametrix).
In particular, one can allow several Euclidean infinite ends,
dilation analytic potentials (see for instance~\cite{Sjostrand-trace}),
or asymptotically hyperbolic manifolds
(see the work of Vasy~\cite{Vasy-AH1,Vasy-AH2}
and in particular~\cite[Theorem~4.9]{Vasy-AH2}).

With this in mind, consider a surface $(M,g)$ with
\begin{equation}
  \label{e:surreal}
M=\mathbb R_r\times\mathbb S^1_\theta,\quad
g=dr^2+{d\theta^2\over 1-r^2a(r)^2},
\end{equation}
where $a\in C^\infty(\mathbb R;\mathbb R)$
satisfies for some $r_0>0$,
$$
a(r)={\sqrt{r^2-1}\over r^2}\quad\text{for }|r|\geq r_0;\quad
|ra(r)|<1\quad\text{for all }r;\quad
a(r)>0\quad\text{for }r>0.
$$
Then $M$ has two Euclidean ends. The corresponding resolvent $R_h(\omega)$
continues to a logarithmic cover of the complex plane~-- to see
that, one can for instance apply the black box formalism~\cite[\S4.2]{dizzy}
together with the continuation of the free resolvent~\cite[\S3.1.4]{dizzy}.
(To obtain an odd-dimensional example where the resolvent continues to $\mathbb C$,
one could replace $(\mathbb S^1,d\theta^2)$ by any compact even-dimensional Riemannian manifold.)
The symbol $p$ has the form
$$
p(r,\theta,\xi_r,\xi_\theta)=\xi_r^2+(1-r^2a(r)^2)\xi_\theta^2,
$$
and the flow $e^{tH_p}$ solves Hamilton's equations
$$
\begin{gathered}
\dot r=2\xi_r,\quad
\dot \theta=2(1-r^2a(r)^2)\xi_\theta,\\
\dot \xi_r=2ra(r)\big(a(r)+ra'(r)\big)\xi_\theta^2,\quad
\dot\xi_\theta=0.
\end{gathered}
$$
Put $E:=1$. Then $p^{-1}(E)$ contains
a trapped trajectory
$$
\gamma_{\mathrm{tr}}(t)=(0,2t,0,1).
$$
Define the trajectory $\gamma(t)\subset p^{-1}(E)$ as follows:
$$
\gamma(t)=(r(t),\theta(t),r(t)a(r(t)),1),
$$
where $r(t)$ is the solution to the ordinary differential equation
$$
\dot r(t)=2r(t)a(r(t)),\quad
r(0)=1,
$$
and $\theta(t)$ is defined by
$$
\dot\theta(t)=2(1-r(t)^2a(r(t))^2),\quad
\theta(0)=0.
$$
Then $r(t)\to \infty$ as $t\to\infty$ and $r(t)\to 0$ as $t\to -\infty$.
It follows that $\gamma(t)$ escapes as $t\to\infty$ and
converges to $\gamma_{\mathrm{tr}}(t)$ as $t\to -\infty$.
Using the linearization of the flow at $\gamma_{\mathrm{tr}}$, we find
$$
\lmax=2a(0),
$$
therefore~\eqref{e:main} becomes
\begin{equation}
  \label{e:main-ex}
\|\chi_1 R_h(1-ih\nu) \chi_2\|_{L^2\to L^2}\geq c_\beta h^{-1-2\beta\nu},
\end{equation}
where $\beta>0$ is any number satisfying $a(0)\beta<{1\over 2}$.

In particular, in case when $a(0)=0$ (that is, $\{r=0\}$
is a degenerate equator for the surface $M$), for all $\nu>0$
the norm of the resolvent $R_h(1-ih\nu)$ grows faster than any power of $h$.
In other words, the point $h^{-1}-i\nu$ is an $\mathcal O(h^\infty)$ quasimode
for the nonsemiclassical resolvent $R_g(z)$.
This gives an example of $h^\infty$ quasimodes which do not give rise to
resonances (as the quasimodes fill in a whole strip, but the number
of resonances in a disk grows at most polynomially, see~\cite[\S\S3.4,4.3]{dizzy}).
This is in contrast with the work of Tang--Zworski~\cite{TangZworski}
concerning quasimodes on the real line.
See~\cite{hans1} for an investigation of the related question of local smoothing
for surfaces of revolution.

For the case $a(0)>0$, under the additional assumption
that $a>0$ everywhere, the surface $M$ has a normally hyperbolic
trapped set. Upper resolvent bounds for such trapping have
been obtained by Wunsch--Zworski~\cite{WunschZworski},
Nonnenmacher--Zworski~\cite{NonnenmacherZworskiInv},
and Dyatlov~\cite{nhp,gaps}.
In particular, the following upper bound,
valid for each fixed $\varepsilon>0$, is a corollary of~\cite[Theorem~2]{gaps}
and Remark (iv) following it
(calculating $\nu_{\min}=\nu_{\max}=a(0)$ in the notation
of that paper):
$$
\|\chi_1 R_h(1-ih\nu)\chi_2\|_{L^2\to L^2}\leq Ch^{-2},\quad
\nu\in \Big[0,{a(0)\over 2}-\varepsilon\Big]\cup
\Big[{a(0)\over 2}+\varepsilon,
a(0)-\varepsilon\Big].
$$
Therefore, in this case the lower bound~\eqref{e:main-ex} becomes
sharp as $\nu\to a(0)$.

\subsection{Outline of the proof and previous results}

Our proof proceeds by constructing a Gaussian beam $u$
which is localized on the segment
$\gamma([-2t_e,0])$ where
$$
t_e:={\beta\over 2}\log(1/h)
$$
is just below the local Ehrenfest time for $\gamma$.
For that, we take a Gaussian beam localized $h^{1/2}$ close to
the segment $\gamma([t_e-t_0,t_e+t_0])$, where $t_0>0$ is small;
see Lemma~\ref{l:basic-beam}. The name `Gaussian beam' comes
from the formula for the beam in a model case,
see~\eqref{e:model-uf}. We next
propagate this fixed time beam for all times $t\in [-t_e,t_e]\cap t_0\mathbb Z$
using the evolution operator $e^{-it(P_h-\omega^2)/h}$,
and sum the resulting terms; see Lemma~\ref{l:long}.
The resulting function $u$ is a quasimode for $P_h-\omega^2$
with the right-hand side consisting of two parts:
one localized near $\gamma(-2t_e)$ and the other one, near~$\gamma(0)$.
The $L^2$ norm of the part corresponding to $\gamma(-2t_e)$ decays like a power of~$h$, due to the negative imaginary part of $\omega$; this power determines
the exponent in~\eqref{e:main}.
The part corresponding to $\gamma(0)$ is cancelled by adding to $u$
an outgoing function localized on~$\gamma([0,\infty))$.
The Gaussian beam construction uses the fact that the trajectory
$\gamma$ escapes in the forward direction, as otherwise the results
of propagating the basic beam for different times may overlap and
cancel each other out. In particular, unlike~\cite{NS}
our construction does not
apply to closed trajectories of the flow.
See Figure~\ref{f:total} in~\S\ref{s:proof}.

To show that $u$ is a quasimode, we need to understand the localization
of Gaussian beams propagated for up to the Ehrenfest time.
For bounded times, this was done by many authors, in particular Hagedorn~\cite{HA} and
C\'ordoba--Fefferman~\cite{CF}; see also Laptev--Safarov--Vassiliev~\cite{Dima}.
More recently, Gaussian beams for manifolds with boundary
have been applied to study inverse problems;
see for instance Kenig--Salo~\cite{KS}, Dos Santos et~al.~\cite{LS},
and the references given there.
They have also been used in control theory to give necessary geometric conditions
for control from the boundary, see for instance Bardos--Lebeau--Rauch~\cite{BLR}
and the references given there. In both of these applications, only bounded time
propagation was necessary; in the first one this is due to the use of Carleman
weights and in the second one, to the bounded range of times taken in the setup.
In~\S\ref{s:short},
we use a simple version of a bounded time Gaussian beam as the starting point of our construction.

Combescure--Robert~\cite{CR} describe propagation of Gaussian beams
up to time ${1\over 3}t_e$ in terms of squeezed coherent states
(where $t_e$ is just below the Ehrenfest time)
and the recent work of Eswarathasan--Nonnenmacher~\cite{NS}
gives such description until time $t_e$ for the case of closed hyperbolic trajectories.

The present paper describes the localization of Gaussian beams propagated
up to the Ehrenfest time, using mildly exotic semiclassical pseudodifferential
operators and a Riemannian metric on $T^*M$ adapted to the linearization
of the Hamiltonian flow~$e^{tH_p}$ on $\gamma$~--
see~\S\ref{s:long}. The resulting description is however less fine than that of bounded time
Gaussian beams, which have oscillatory integral representations with complex phase functions; see
for instance Ralston~\cite{Ralston-notes} and Popov~\cite{Pop}. Moreover, the use of pseudodifferential
calculus requires to restrict ourselves to the class of smooth metrics and potentials.

\section{Preliminaries}

Our proofs rely on semiclassical analysis; we briefly present here the relevant
parts of this theory and refer the reader to~\cite{e-z}
and~\cite[Appendix~E]{dizzy} for a comprehensive introduction to the subject.

Let $M$ be a manifold. We consider the algebra $\Psi^k(M)$ of
pseudodifferential operators on $M$ with symbols
in the class $S^k_{1,0}(T^*M)$, defined as follows:
$$
a(x,\xi;h)\in S^k_{1,0}(T^*M)\ \Longleftrightarrow\ 
\sup_{h\in (0,1]}\sup_{x\in K\atop \xi\in T^*_x M}\langle\xi\rangle^{|\beta|-k}|\partial^\alpha_x\partial^\beta_\xi a(x,\xi;h)|<\infty
$$
where $K\subset M$ ranges over compact subsets and $\alpha,\beta$ are multiindices.
In the case when $M=\mathbb R^n$ and $a\in S^k_{1,0}(T^*M)$ is compactly supported in $x$,
one can define an element of $\Psi^k(\mathbb R^n)$ using the quantization procedure
\begin{equation}
  \label{e:op-h-0}
\Op^0_h(a) u(x)=(2\pi h)^{-n}\int_{\mathbb R^{2n}}e^{{i\over h}\langle x-y,\xi\rangle} a(x,\xi)u(y)\,dyd\xi.
\end{equation}
To define pseudodifferential operators on a general manifold $M$, we fix
a family of local coordinate charts $\varphi_j:U_j\to\mathbb R^n$, where
$U_j\subset M$ is a locally finite covering,
and take cutoff functions $\chi_j,\chi'_j\in C_0^\infty(U_j)$ such that
$\sum_j\chi_j=1$ and $\chi'_j=1$ near $\supp\chi_j$. For $a\in S^k_{1,0}(T^*M)$, we define
\begin{equation}
  \label{e:Op-h-M}
\Op_h(a)=\sum_j \chi'_j\varphi_j^*\Op_h^0\big((\chi_ja)\circ \widetilde\varphi_j^{-1}\big)(\varphi_j^{-1})^*\chi'_j,
\end{equation}
where $\widetilde\varphi_j:T^*U_j\to T^*\mathbb R^n$ is the symplectic lift of $U_j$.
All operators in $\Psi^k(M)$ have the form~\eqref{e:Op-h-M} plus an $\mathcal O(h^\infty)_{\mathcal E'(M)\to C^\infty(M)}$
remainder. We refer the reader to~\cite[\S E.1.5]{dizzy} for details.

We will also often use the mildly exotic symbol class $S^{\comp}_\rho(T^*M)$, $\rho\in [0,1/2)$, defined
as follows: a function $a(x,\xi;h)$ lies in $S^{\comp}_\rho$ if and only if
\begin{itemize}
\item $\supp a$ lies in some $h$-independent compact subset of $T^*M$; and
\item for each multiindices $\alpha,\beta$, there exists a constant $C$ such that
$$
\sup_{x,\xi} |\partial^\alpha_x\partial^\beta_\xi a(x,\xi;h)|\leq Ch^{-\rho(|\alpha|+|\beta|)}.
$$
\end{itemize}
Applying the quantization procedure~\eqref{e:Op-h-M} to symbols of class $S^{\comp}_\rho(T^*M)$,
and allowing $\mathcal O(h^\infty)_{\mathcal D'(M)\to C_0^\infty(M)}$ remainders, we obtain
the pseudodifferential class $\Psi^{\comp}_\rho(M)$. We require that operators in this class
be compactly supported uniformly in $h$. The class $\Psi^{\comp}_\rho$ enjoys
properties similar to the standard pseudodifferential class $\Psi^k$~-- see for instance~\cite[\S4.4]{e-z}
or~\cite[\S3.1]{qeefun}. For $\rho=0$, we recover the class $\Psi^{\comp}$ of
pseudodifferential operators with compactly supported $S_{1,0}$ symbols.

It can be seen directly from~\eqref{e:op-h-0} and~\eqref{e:Op-h-M} that
$\Op_h(1)$ is the identity operator. It follows that
\begin{equation}
  \label{e:disjoint}
\begin{gathered}
a,b\in S^{\comp}_\rho(T^*M),\quad
\supp(1-a)\cap\supp b=\emptyset\\\Longrightarrow\
\Op_h(b)=\Op_h(a)\Op_h(b)+\mathcal O(h^\infty)_{\mathcal D'\to C_0^\infty},\\
\phantom{\Longrightarrow\ \,}\Op_h(b)=\Op_h(b)\Op_h(a)+\mathcal O(h^\infty)_{\mathcal D'\to C_0^\infty},\\
\end{gathered}
\end{equation}

We will also use the notion of the \emph{wavefront set}
$\WFh(u)\subset\overline T^*M$ of an $h$-dependent family
of distributions
$u=u(h)\in L^2_{\loc}(M)$, which can be defined in particular
when $\|\chi u\|_{L^2}$ is bounded polynomially in $h$
for each $\chi\in C_0^\infty(M)$.
Here $\overline T^*M\supset T^*M$ is the fiber-radially compactified
cotangent bundle, but we will only be interested in the intersection
of $\WFh(u)$ with $T^*M$.
Similarly, we use wavefront sets $\WFh(A)\subset \overline T^*(M_1\times M_2)$
of $h$-tempered operators $A:C_0^\infty(M_2)\to \mathcal D'(M_1)$. If
$M_1=M_2=M$ and $A$ is a pseudodifferential operator
(in either of the classes discussed above), then it is pseudolocal in
the sense that $\WFh(A)$ is contained in the diagonal
of $\overline T^*M$; we then view $\WFh(A)$ as a subset of $\overline T^*M$.
We will use the following property valid for pseudodifferential properly
supported operators $A$:
$$
\WFh(A)\cap \WFh(u)=\emptyset\ \Longrightarrow\ Au=\mathcal O(h^\infty)_{C^\infty(M)}.
$$
See~\cite[\S E.2.3]{dizzy} for details.

For $U_j\subset T^*M_j$ and two $h$-tempered operators $A,B:C_0^\infty(M_2)\to \mathcal D'(M_1)$, we say that
$$
A=B+\mathcal O(h^\infty)\quad\text{microlocally on }U_1\times U_2,
$$
if $\WFh(A-B)\cap (U_1\times U_2)=\emptyset$.
If $A,B$ are pseudodifferential, we may replace $U_1\times U_2$ with just a subset
of $T^*M$.

Finally, we review the classes $I^{\comp}(\varkappa)$ of semiclassical Fourier integral
operators. Here $\varkappa:U_2\to U_1$, $U_j\subset T^*M_j$, is an exact canonical transformation
(with the choice of antiderivative implicit in the notation)
and elements of $I^{\comp}(\varkappa)$ are $h$-dependent families of
smoothing compactly supported operators
$\mathcal D'(M_2)\to C_0^\infty(M_1)$. See for instance~\cite[\S2.2]{hgap} for details.

If $a\in S^{\comp}_\rho(T^*M_1)$, $B\in I^{\comp}(\varkappa)$,
$B'\in I^{\comp}(\varkappa^{-1})$, then
there exists $b\in S^{\comp}_\rho(T^*M_2)$ such that
\begin{equation}
  \label{e:gorov}
B'\Op_h(a)B=\Op_h(b)+\mathcal O(h^\infty)_{\mathcal D'\to C_0^\infty}.
\end{equation}
This is a version of Egorov's Theorem and follows by a direct calculation in local coordinates
involving the oscillatory integral representations of $B,B'$
and the method of stationary phase; see for instance~\cite[Theorem~10.1]{Grigis-Sjostrand}.
Moreover, we may choose~$b$ so that $\supp b\subset \varkappa^{-1}(\supp a)$;
indeed, every term in the stationary phase expansion for $b$ satisfies this
support condition and the full symbol $b$ may be constructed from this expansion
by Borel's Theorem~\cite[Theorem~4.15]{e-z}.

If $P_h$ is the operator defined in~\eqref{e:p-h}, $p$ is defined in~\eqref{e:the-p},
and $A\in\Psi^{\comp}_h$, then the operators
$$
e^{-itP_h/h}A,\
Ae^{-itP_h/h}:L^2(M)\to L^2(M)
$$
lie in $I^{\comp}(e^{tH_p})$ modulo a $\mathcal O(h^\infty)_{L^2\to L^2}$ remainder. See
for instance~\cite[Theorem~10.4]{e-z} for the proof. Combining this with~\eqref{e:gorov}, we see
that for each $a\in S^{\comp}_\rho(T^*M)$, there exists $b\in S^{\comp}_\rho(T^*M)$
such that $\supp b\subset e^{-tH_p}(\supp a)$ and 
\begin{equation}
  \label{e:gorov2}
e^{itP_h/h}\Op_h(a)e^{-itP_h/h}=\Op_h(b)+\mathcal O(h^\infty)_{L^2\to L^2}.
\end{equation}
Moreover, we have $b=a\circ e^{tH_p}+\mathcal O(h^{1-2\rho})_{S^{\comp}_\rho}$.

\section{Short Gaussian beam}
  \label{s:short}

In this section, we construct a Gaussian beam localized on a short segment
of a Hamiltonian flow line
$$
\gamma^0(t):=e^{tH_p}(\tilde x_0,\tilde\xi_0),\quad
(\tilde x_0,\tilde\xi_0)\in p^{-1}(E)
$$
of the symbol $p$ from~\eqref{e:the-p}.

For $U\subset\mathbb R$ and $\rho\in [0,1/2)$, denote by
\begin{equation}
  \label{e:gamma-nbhd}
\gamma^0_{h^\rho}(U)\ \subset\ T^*M
\end{equation}
the $h^\rho$-neighborhood of
the set $\gamma^0(U)$ (with respect to any fixed
smooth distance function on $T^*M$).
In this section, we prove the following
\begin{lemm}
  \label{l:basic-beam}
Fix $(\tilde x_0,\tilde \xi_0)\in p^{-1}(E)$ and $\rho\in [0,1/2)$.
Then for $t_0>0$ small enough,
there exist $h$-dependent functions $u_0=u_0(h),f_0=f_0(h)\in C_0^\infty(M)$
such that:

1. We have $\|u_0\|_{L^2},\|f_0\|_{L^2}\leq C$ for some $h$-independent constant $C$ and
\begin{equation}
  \label{e:basic-1}
(P_h-\omega^2)u_0=h\big(e^{-it_0(P_h-\omega^2)/h}f_0-f_0\big)+\mathcal O(h^\infty)_{L^2}.
\end{equation}

2. There exist $a_u,b_u\in S_\rho^{\comp}(T^*M)$ such that
\begin{align}
  \label{e:basic-2}
u_0=\Op_h(a_u)u_0+\mathcal O(h^\infty)_{L^2},&\quad
\supp a_u\subset \gamma^0_{h^\rho}\big([-2t_0/3,2t_0/3]\big);\\
  \label{e:basic-3}
\|\Op_h(b_u)u_0\|_{L^2}\geq C^{-1},&\quad
\supp b_u\subset \gamma^0_{h^\rho}\big([-t_0/4,t_0/4]\big).
\end{align}

3. There exists $a_f\in S_\rho^{\comp}(T^*M)$ such that
\begin{equation}
  \label{e:basic-4}
f_0=\Op_h(a_f)f_0+\mathcal O(h^\infty)_{L^2},\quad
\supp a_f\subset \gamma^0_{h^\rho}\big([-2t_0/3, -t_0/3]\big).
\end{equation}

If $(\tilde x_0,\tilde\xi_0)$ varies in a compact subset of $p^{-1}(E)$,
then the constants above can be chosen independently of
$(\tilde x_0,\tilde\xi_0)$.
\end{lemm}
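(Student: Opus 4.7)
The plan is to take $f_0$ to be a short WKB Gaussian beam supported on the segment $\gamma^0([-2t_0/3,-t_0/3])$ and to define
\[
u_0:=-i\int_0^{t_0} e^{-is(P_h-\omega^2)/h} f_0\,ds.
\]
Equation \eqref{e:basic-1} then holds identically with no remainder by the fundamental theorem of calculus applied to $s\mapsto e^{-is(P_h-\omega^2)/h}f_0$, using $(P_h-\omega^2)e^{-is(P_h-\omega^2)/h}=ih\,\partial_s e^{-is(P_h-\omega^2)/h}$.

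For $f_0$, I work in local coordinates $y=(y_1,y_\perp)$ near $\gamma^0(-t_0/2)$ adapted to the Hamiltonian flow, so that the trajectory is a segment of the $y_1$-axis traversed at a fixed speed $c>0$. Set
\[
f_0(y):=h^{-(n-1)/4}\chi(y_1)\rho(y_\perp/h^{1/2})e^{iS(y)/h},
\]
where $S$ is a real phase solving the eikonal equation $p(y,\nabla S(y))=E$ near the trajectory, $\chi\in C_0^\infty$ cuts off the longitudinal variable to the interval corresponding to $\gamma^0([-2t_0/3,-t_0/3])$, and $\rho$ is a fixed Schwartz profile. A Gaussian computation gives $\|f_0\|_{L^2}\le C$. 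The $x$-support sits in an $h^{1/2}$-tube about the trajectory segment, and a semiclassical Fourier analysis of the transverse factor shows the momentum is within $h^{1/2}$ of $\nabla S$, so $\WFh(f_0)\subset\gamma^0([-2t_0/3,-t_0/3])$; a cutoff $a_f\in S^{\comp}_\rho$ equal to $1$ near this wavefront set and supported in $\gamma^0_{h^\rho}([-2t_0/3,-t_0/3])$ yields \eqref{e:basic-4}.

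For the upper bound and microlocalization of $u_0$, since $\omega^2=E-2ih\nu\sqrt{E}+\mathcal O(h^2)$, the factor $e^{is\omega^2/h}$ has modulus $e^{2s\nu\sqrt E+\mathcal O(h)}=\mathcal O(1)$ on $s\in[0,t_0]$ and $e^{-isP_h/h}$ is $L^2$-unitary, giving $\|e^{-is(P_h-\omega^2)/h}f_0\|_{L^2}\le C$ and then $\|u_0\|_{L^2}\le C$ after integration. Since $e^{-isP_h/h}\in I^{\comp}(e^{sH_p})$ modulo $\mathcal O(h^\infty)$, wavefront sets propagate along the flow, so
\[
\WFh(u_0)\ \subset\ \bigcup_{s\in[0,t_0]}e^{sH_p}\bigl(\gamma^0([-2t_0/3,-t_0/3])\bigr)=\gamma^0([-2t_0/3,2t_0/3]).
\]
Take $a_u\in S^{\comp}_\rho$ equal to $1$ near this segment and supported in $\gamma^0_{h^\rho}([-2t_0/3,2t_0/3])$ to obtain \eqref{e:basic-2}.

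The principal obstacle is the lower bound \eqref{e:basic-3}, which requires ruling out destructive interference in the superposition defining $u_0$. In the adapted coordinates, the leading-order WKB evolution yields
\[
e^{-is(P_h-\omega^2)/h}f_0 = e^{2s\nu\sqrt E}h^{-(n-1)/4}\chi(y_1-sc)\rho(y_\perp/h^{1/2})e^{iS(y)/h} + \mathcal O(h)_{L^2}
\]
for $s\in[0,t_0]$. Integrating and changing variables $\tau=y_1-sc$ gives
\[
u_0(y) = h^{-(n-1)/4}G(y_1)\rho(y_\perp/h^{1/2})e^{iS(y)/h} + \mathcal O(h)_{L^2}
\]
with $G$ smooth, bounded, and of order $1$ on a nonempty subsegment containing $\gamma^0(0)$ in its interior (namely the range of $y_1$ for which the $\tau$-integration captures the full support of $\chi$). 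A direct Gaussian integration then yields $\|u_0\|_{L^2}\ge c>0$ with mass concentrated near this subsegment; choosing $b_u\in S^{\comp}_\rho$ equal to $1$ near $\gamma^0(0)$ and supported in $\gamma^0_{h^\rho}([-t_0/4,t_0/4])$ gives $\|\Op_h(b_u)u_0\|_{L^2}\ge c>0$. Uniformity in $(\tilde x_0,\tilde\xi_0)$ in compact subsets of $p^{-1}(E)$ follows from the smooth dependence of the local coordinates, the phase $S$, the amplitudes, and the cutoffs on the base point.
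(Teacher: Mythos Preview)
Your overall strategy coincides with the paper's: build a short Gaussian beam $f_0$ localized on $\gamma^0([-2t_0/3,-t_0/3])$ and set $u_0$ to be the time-integral of its propagation, so that \eqref{e:basic-1} follows from the fundamental theorem of calculus. Two steps in your execution do not go through as written, and the paper's proof handles exactly these by conjugating to a model operator.

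First, the localization claims \eqref{e:basic-2} and \eqref{e:basic-4} are \emph{not} consequences of the wavefront set inclusions you state. The symbols $a_u,a_f$ are required to be supported in $h^\rho$-shrinking tubes; the condition $a_f=1$ ``near $\WFh(f_0)$'' is meaningless for such $h$-dependent symbols, and wavefront set containment only controls $h$-independent cutoffs. What you actually need (and your Gaussian ansatz does give) is concentration at scale $h^{1/2}$ in both position and momentum; since $\rho<1/2$ this implies the $h^\rho$-scale statement for $f_0$. For $u_0$ you must then argue that this concentration is propagated, which is Egorov's theorem for the class $S^{\comp}_{\rho'}$ with some $\rho'\in(\rho,1/2)$, not a wavefront set argument. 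The paper carries out exactly this $\rho'$-to-$\rho$ passage.

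Second, your leading-order formula for $e^{-is(P_h-\omega^2)/h}f_0$ is incorrect in general adapted coordinates. Straightening the trajectory to the $y_1$-axis on the base does not make $P_h$ into $hD_{y_1}$; the transport equation moves the transverse amplitude by the linearized flow, so $\rho(y_\perp/h^{1/2})$ becomes $\rho(B(s,y_1)y_\perp/h^{1/2})$ times a Jacobian factor, and there is also a subprincipal transport term. Your integration-in-$s$ argument for the lower bound relies on the transverse profile being the same for all $s$, which is only true in the model case. The paper resolves this by invoking the Darboux theorem to find a symplectomorphism $\varkappa$ with $p=\xi_1\circ\varkappa$, quantizing it to Fourier integral operators $B,B'$ with $P_hB'=B'(hD_{x_1})+\mathcal O(h^\infty)$ microlocally, and setting $u_0=B'u^{\mathbf m}$, $f_0=B'f^{\mathbf m}$ where $u^{\mathbf m},f^{\mathbf m}$ are the explicit model beams with Gaussian transverse factor $e^{-|x'|^2/(2h)}$. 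In the model the propagator is exact translation in $x_1$, your time-integral becomes exactly the antiderivative $\varphi^{\mathbf m}$ of $\psi^{\mathbf m}(\cdot+t_0)-\psi^{\mathbf m}$, and both the $h^\rho$-localization and the lower bound are checked by direct Fourier computations; the FIO conjugation then transfers all of this to the general case via \eqref{e:gorov}.
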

\Remark The bounds~\eqref{e:basic-2} and~\eqref{e:basic-4} can be interpreted as
follows: $u_0$ is microlocally concentrated in an $h^\rho$ neighborhood of
$\gamma^0([-2t_0/3,2t_0/3])$, while $f_0$ is concentrated
in an $h^\rho$ neighborhood of $\gamma^0([-2t_0/3,-t_0/3])$.
In particular, we have
\begin{equation}
  \label{e:basic-wf}
\WFh(u_0)\subset \gamma^0\big([-2t_0/3,2t_0/3]\big),\quad
\WFh(f_0)\subset \gamma^0\big([-2t_0/3,-t_0/3]\big).
\end{equation}
By Egorov's Theorem~\eqref{e:gorov2} applied to~\eqref{e:basic-4}, we also see that
$$
e^{-it_0(P_h-\omega^2)/h}f_0=\Op_h(b_f)e^{-it_0(P_h-\omega^2)/h}f_0+\mathcal O(h^\infty)_{L^2}
$$
where $b_f$ is supported in a $Ch^\rho$ neighborhood of $\gamma^0([t_0/3,2t_0/3])$.
See Figure~\ref{f:sausage}.

\begin{figure}
\includegraphics{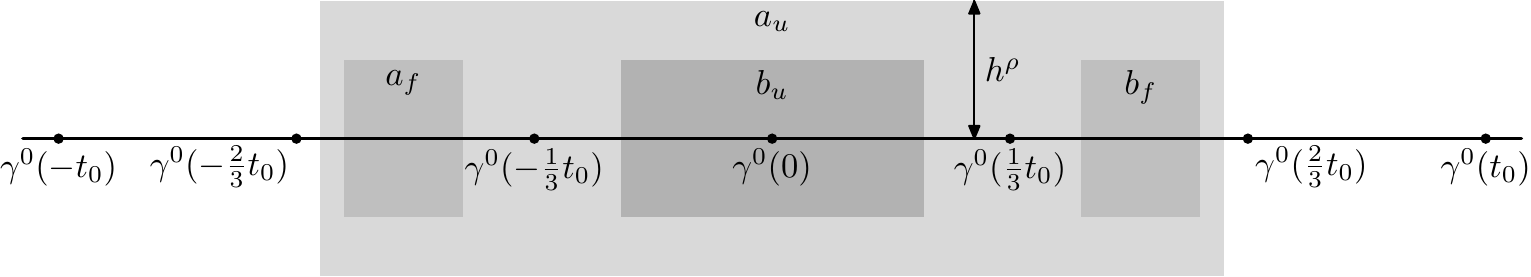}
\caption{The trajectory $\gamma^0$ and the supports of the symbols
$a_u,b_u,a_f,b_f$.}
\label{f:sausage}
\end{figure}

\subsection{Model case}
\label{s:basic-model}

We start the proof of Lemma~\ref{l:basic-beam} by considering the model case
\begin{equation}
  \label{e:basic-model}
M=\mathbb R^n,\quad
P_h^{\mathbf m}:=hD_{x_1},\quad
p^{\mathbf m}(x,\xi)=\xi_1,\quad
\gamma^{\mathbf m}(t)=(t,0,E,0).
\end{equation}
Here we write elements of $\mathbb R^n$ as $(x_1,x')$, with $x'\in\mathbb R^{n-1}$,
and elements of $T^*\mathbb R^n$ as~$(x_1,x',\xi_1,\xi')$.

Let $t_0>0$, choose a function
$$
\psi^{\mathbf m}\in C_0^\infty(\mathbb R),\quad
\supp \psi^{\mathbf m}\subset \Big({t_0\over 3},{2t_0\over 3}\Big),\quad
\int_{\mathbb R}\psi^{\mathbf m}(x)\,dx=1,
$$
and define
$$
\varphi^{\mathbf m}\in C_0^\infty(\mathbb R),\quad
d_x\varphi^{\mathbf m}(x)=\psi^{\mathbf m}(x+t_0)-\psi^{\mathbf m}(x).
$$
Note that
$$
\supp\varphi^{\mathbf m}\subset \Big(-{2t_0\over 3},{2t_0\over 3}\Big),\quad
\varphi^{\mathbf m}=1\quad\text{near }\Big[-{t_0\over 3},{t_0\over 3}\Big].
$$
Define the following $h$-dependent families of functions on $\mathbb R^n$:
\begin{equation}
  \label{e:model-uf}
\begin{aligned}
u^{\mathbf m}(x;h)&:=h^{-{n-1\over 4}}\varphi^{\mathbf m}(x_1)e^{i\omega^2 x_1\over h} e^{-{|x'|^2\over 2h}},\\
f^{\mathbf m}(x;h)&:=ih^{-{n-1\over 4}}\psi^{\mathbf m}(x_1+t_0)e^{i\omega^2 x_1\over h}e^{-{|x'|^2\over 2h}}.
\end{aligned}
\end{equation}
It is easy to see that
$$
\|u^{\mathbf m}(h)\|_{L^2},\
\|f^{\mathbf m}(h)\|_{L^2}\leq C.
$$
Moreover, the following analog of~\eqref{e:basic-1} holds:
\begin{equation}
  \label{e:bbasic-1}
(P_h^{\mathbf m}-\omega^2)u^{\mathbf m}(x;h)=h\big(e^{it_0\omega^2/h}f^{\mathbf m}(x_1-t_0,x';h)-f^{\mathbf m}(x;h)\big).
\end{equation}
We next claim that there exist $a_u^{\mathbf m},b_u^{\mathbf m},a_f^{\mathbf m}\in S^{\comp}_\rho(T^*\mathbb R^n)$ such that,
with $\Op_h^0$ defined in~\eqref{e:op-h-0}
and $\gamma^{\mathbf m}_{h^{\rho}}$ defined similarly to~\eqref{e:gamma-nbhd},
\begin{align}
  \label{e:bbasic-2}
u^{\mathbf m}=\Op_h^0(a_u^{\mathbf m})u^{\mathbf m}+\mathcal O(h^\infty)_{L^2},&\quad
\supp a_u^{\mathbf m}\subset \gamma^{\mathbf m}_{h^\rho}\big([-2t_0/3,2t_0/3]\big);\\
  \label{e:bbasic-3}
\|\Op^0_h(b_u^{\mathbf m})u^{\mathbf m}\|_{L^2}\geq C^{-1},&\quad
\supp b_u^{\mathbf m}\subset \gamma^{\mathbf m}_{h^\rho}\big([-t_0/4,t_0/4]\big);\\
  \label{e:bbasic-4}
f^{\mathbf m}=\Op^0_h(a^{\mathbf m}_f)f^{\mathbf m}+\mathcal O(h^\infty)_{L^2},&\quad
\supp a^{\mathbf m}_f\subset \gamma^{\mathbf m}_{h^\rho}\big([-2t_0/3, -t_0/3]\big).
\end{align}
Indeed, take $\chi^{\mathbf m}\in C_0^\infty(\mathbb R)$ such that
$\supp\chi^{\mathbf m}\subset (-2/3,2/3)$ and $\chi^{\mathbf m}=1$ near $t_0^{-1}\supp\varphi^{\mathbf m}$.
Put
$$
a^{\mathbf m}_u(x_1,x',\xi_1,\xi';h):=\chi^{\mathbf m}\Big({x_1\over t_0}\Big)
\chi^{\mathbf m}\Big({\xi_1-E\over h^\rho}\Big)
\chi^{\mathbf m}\Big({|x'|\over h^\rho}\Big)
\chi^{\mathbf m}\Big({|\xi'|\over h^\rho}\Big).
$$
It is clear that $a_u^{\mathbf m}\in S^{\comp}_\rho(T^*\mathbb R^n)$ and
$\supp a_u^{\mathbf m}\subset \gamma^{\mathbf m}_{h^\rho}([-2t_0/3,2t_0/3])$.
Next,
$$
\Op_h^0(a^{\mathbf m}_u)=\chi^{\mathbf m}\Big({x_1\over t_0}\Big)
\chi^{\mathbf m}\Big({hD_{x_1}-E\over h^\rho}\Big)
\chi^{\mathbf m}\Big({|x'|\over h^\rho}\Big)
\chi^{\mathbf m}\Big({|hD_{x'}|\over h^\rho}\Big).
$$
To check~\eqref{e:bbasic-2}, it remains to show that
each of the functions
$$
\chi^{\mathbf m}\Big({x_1\over t_0}\Big)u^{\mathbf m},\quad
\chi^{\mathbf m}\Big({hD_{x_1}-E\over h^\rho}\Big)u^{\mathbf m},\quad
\chi^{\mathbf m}\Big({|x'|\over h^\rho}\Big)u^{\mathbf m},\quad
\chi^{\mathbf m}\Big({|hD_{x'}|\over h^\rho}\Big)u^{\mathbf m}
$$
is equal to $u^{\mathbf m}+\mathcal O(h^\infty)_{L^2}$.
The first of these is trivial as $\varphi^{\mathbf m}(x_1)(1-\chi^{\mathbf m}(x_1/t_0))=0$.
The third one follows since $e^{-{|x'|^2\over 2h}}(1-\chi^{\mathbf m}(|x'|/h^\rho))=\mathcal O(h^\infty)_{L^2(\mathbb R^{n-1})}$
as long as $\rho<1/2$. The second and fourth operators are Fourier multipliers; to handle
them, it suffices to calculate the semiclassical Fourier transform of $u^{\mathbf m}$:
$$
\mathcal F_hu^{\mathbf m}(\xi;h):=(2\pi h)^{-n/2}\int_{\mathbb R^n} e^{-{i\langle x,\xi\rangle\over h}}
u^{\mathbf m}(x;h)\,dx
=(2\pi h)^{-1/2}h^{-{n-1\over 4}}\widehat{\varphi^{\mathbf m}}\Big({\xi_1-\omega^2\over h}\Big)
e^{-{|\xi'|^2\over 2h}}
$$
where $\widehat{\varphi^{\mathbf m}}$ is the nonsemiclassical Fourier transform of $\varphi^{\mathbf m}$,
which is an $h$-independent Schwartz function.
Using the bounds
$$
\begin{aligned}
\bigg(1-\chi^{\mathbf m}\Big({\xi_1-E\over h^\rho}\Big)\bigg)\widehat{\varphi^{\mathbf m}}\Big({\xi_1-\omega^2\over h}\Big)&=\mathcal O(h^\infty)_{L^2(\mathbb R)},\\
\bigg(1-\chi^{\mathbf m}\Big({|\xi'|\over h^\rho}\Big)\bigg)e^{-{|\xi'|^2\over 2h}}&=\mathcal O(h^\infty)_{L^2(\mathbb R^{n-1})}
\end{aligned}
$$
and the fact that $\omega^2=E+\mathcal O(h)$ (following from~\eqref{e:omega}),
we finish the proof of~\eqref{e:bbasic-2}.

We next put
$$
b_u^{\mathbf m}(x_1,x',\xi_1,\xi';h):=
\chi^{\mathbf m}\Big({4x_1\over t_0}\Big)
\chi^{\mathbf m}\Big({\xi_1-E\over h^\rho}\Big)
\chi^{\mathbf m}\Big({|x'|\over h^\rho}\Big)
\chi^{\mathbf m}\Big({|\xi'|\over h^\rho}\Big).
$$
Then~\eqref{e:bbasic-3} follows from the following
fact, which is proved similarly to~\eqref{e:bbasic-2}:
$$
\Op_h^0(b^{\mathbf m}_u)u^{\mathbf m}(x;h)=h^{-{n-1\over 4}}\chi^{\mathbf m}\Big({4x_1\over t_0}\Big)
e^{i\omega^2 x_1\over h} e^{-{|x'|^2\over 2h}}+\mathcal O(h^\infty)_{L^2}.
$$
The bound~\eqref{e:bbasic-4} is proved similarly to~\eqref{e:bbasic-2}, taking
$$
a_f^{\mathbf m}(x_1,x',\xi_1,\xi';h):=
\chi_1^{\mathbf m}(x_1+t_0)
\chi^{\mathbf m}\Big({\xi_1-E\over h^\rho}\Big)
\chi^{\mathbf m}\Big({|x'|\over h^\rho}\Big)
\chi^{\mathbf m}\Big({|\xi'|\over h^\rho}\Big)
$$
where $\chi_1^{\mathbf m}\in C_0^\infty(\mathbb R)$ is supported in
$(t_0/3,2t_0/3)$ and equal to 1 near $\supp\psi^{\mathbf m}$.

\subsection{General case}

We now prove Lemma~\ref{l:basic-beam}. For that, we reduce to the
model case of~\S\ref{s:basic-model} using conjugation by Fourier integral
operators.

By~\eqref{e:regval}, we have $dp(\tilde x_0,\tilde \xi_0)\neq 0$.
Therefore, by Darboux Theorem~\cite[Theorem~21.1.6]{Hormander3}, there exists a symplectomorphism
$$
\varkappa:U_\varkappa\to V_\varkappa,\quad
(\tilde x_0,\tilde\xi_0)\in U_\varkappa\subset T^*M,\quad
V_\varkappa\subset T^*\mathbb R^n,
$$
such that
$$
\varkappa(\tilde x_0,\tilde\xi_0)=(0,0,E,0),\quad
p=\xi_1\circ\varkappa\quad\text{on }U_\varkappa.
$$
Take $t_0>0$ such that $\gamma^0([-t_0,t_0])\subset U_\varkappa$.
Then for $|t|\leq t_0$, we have $\varkappa(\gamma^0(t))=\gamma^{\mathbf m}(t)$,
with $\gamma^{\mathbf m}$ defined in~\eqref{e:basic-model}.

For $t_0$ small enough, there exist Fourier
integral operators
$$
B\in I^{\comp}(\varkappa),\quad
B'\in I^{\comp}(\varkappa^{-1})
$$
such that
\begin{align}
  \label{e:mc-1}
B'B=1+\mathcal O(h^\infty)&\quad\text{microlocally near }\gamma^0([-t_0,t_0]),\\
  \label{e:mc-2}
BB'=1+\mathcal O(h^\infty)&\quad\text{microlocally near }\gamma^{\mathbf m}([-t_0,t_0]),\\
  \label{e:mc-3}
P_hB'=B'(hD_{x_1})+\mathcal O(h^\infty)&\quad\text{microlocally near }\gamma^0([-t_0,t_0])\times\gamma^{\mathbf m}([-t_0,t_0]).
\end{align}
See for instance~\cite[Theorem~12.3]{e-z} for the proof.

We now put
$$
u_0(h):=B'u^{\mathbf m}(h),\quad
f_0(h):=B'f^{\mathbf m}(h),
$$
with $u^{\mathbf m},f^{\mathbf m}$ defined in~\eqref{e:model-uf}.

Since $\|B'\|_{L^2(\mathbb R^n)\to L^2(M)}=\mathcal O(1)$, we have
$\|u_0\|_{L^2},\|f_0\|_{L^2}\leq C$. Note that~\eqref{e:basic-wf} holds
for $u^{\mathbf m},f^{\mathbf m},\gamma^{\mathbf m}$ by~\eqref{e:bbasic-2} and~\eqref{e:bbasic-4};
since $\WFh(B')$ lies inside the graph of $\varkappa^{-1}$,
we see that~\eqref{e:basic-wf} holds for $u_0,f_0,\gamma^0$. In particular,
it will be enough to argue microlocally near~$\gamma^0([-t_0,t_0])$.

The identity~\eqref{e:basic-1} follows from~\eqref{e:bbasic-1}, \eqref{e:mc-1},
\eqref{e:mc-3}, and the following statement:
\begin{equation}
  \label{e:mc-4}
e^{-itP_h/h}f_0=B'f^{\mathbf m}_t+\mathcal O(h^\infty)_{L^2},\ 0\leq t\leq t_0;\quad
f^{\mathbf m}_t(x_1,x';h):=f^{\mathbf m}(x_1-t,x';h).
\end{equation}
Since~\eqref{e:mc-4} is true for $t=0$, it suffices to show that
$$
\partial_t (e^{itP_h/h}B'f^{\mathbf m}_t)=\mathcal O(h^\infty)_{L^2},\quad
0\leq t\leq t_0.
$$
This in turn can be rewritten as
$$
{i\over h}e^{itP_h/h}(P_hB'-B'hD_{x_1})f^{\mathbf m}_t=\mathcal O(h^\infty)_{L^2},\quad
0\leq t\leq t_0,
$$
which follows from~\eqref{e:mc-3} and the fact that
$\WFh(f^{\mathbf m}_t)\subset \gamma^{\mathbf m}([t-2t_0/3,t-t_0/3])$.

The estimates~\eqref{e:basic-2}--\eqref{e:basic-4} follow from~\eqref{e:bbasic-2}--\eqref{e:bbasic-4}, if we choose $a_u,b_u,a_f$ such that
$$
B'\Op_h(a^{\mathbf m}_u)=\Op_h(a_u)B'+\mathcal O(h^\infty)_{L^2\to L^2},
$$
and similarly for $b_u,a_f$. To do that, it suffices to
multiply~\eqref{e:gorov} on the right by $B'$ and use~\eqref{e:mc-2}.
If we carry out the arguments of~\S\ref{s:basic-model}
with $\rho$ replaced by some $\rho'\in (\rho,1/2)$,
then we have for small $h$
$$
\supp a_u\ \subset\ \varkappa^{-1}\big(\gamma^{\mathbf m}_{h^{\rho'}}\big([-2t_0/3,2t_0/3]\big)\big)
\ \subset\ \gamma^0_{h^\rho}\big([-2t_0/3,2t_0/3]\big)
$$
and similarly for~$a_f$; this finishes the proofs of~\eqref{e:basic-2}, \eqref{e:basic-4}.

For~\eqref{e:basic-3}, we additionally use that
$$
\|\Op_h^0(b^{\mathbf m}_u)u^{\mathbf m}\|_{L^2}\leq
C\|BB'\Op_h^0(b^{\mathbf m}_u)u^{\mathbf m}\|_{L^2}+\mathcal O(h^\infty)\leq
C\|\Op_h(b_u)u_0\|_{L^2}+\mathcal O(h^\infty).
$$
This finishes the proof of Lemma~\ref{l:basic-beam}.

\section{Long Gaussian beam}
  \label{s:long}

We now construct a Gaussian beam localized on a $\sim\log(1/h)$ long trajectory
of the flow $e^{tH_p}$. Recall the trajectory $\gamma$ defined in~\eqref{e:gamma}
and the associated constant $\lmax\geq 0$ defined in~\eqref{e:lmax}.
\begin{lemm}
  \label{l:long}
Let $\beta>0$ satisfy~\eqref{e:beta-cond}.
If $t_0>0$ is small enough,
then there exist $h$-dependent functions $u=u(h),f_\pm=f_\pm(h)\in C_0^\infty(M)$
such that:

1. We have $\|u\|_{L^2}\leq C$, $\|f_+\|_{L^2}\leq C$,
and $\|f_-\|_{L^2}\leq Ch^{2\sqrt E\beta\nu}$ for some $h$-independent constant $C$,
and $u,f_\pm$ are supported inside some $h$-independent compact subset of $M$.

2. $(P_h-\omega^2)u=h(f_+-f_-)+\mathcal O(h^\infty)_{L^2}$.

3. $\WFh(f_+)\subset \gamma([t_0/3,2t_0/3])$.

4. There exists $b\in S^{\comp}_\rho(T^*M)$ with $\supp b$ contained
in an $o(1)$ neighborhood of~$\gamma([-t_0/4,t_0/4])$ as $h\to 0$
and such that $\|\Op_h(b)u\|_{L^2}\geq C^{-1}$.
\end{lemm}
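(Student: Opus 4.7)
The plan is to realize $u$ as a telescoping sum of time-translates of the short Gaussian beam from Lemma~\ref{l:basic-beam}, chosen so the translates sweep out the segment $\gamma([-2t_e,0])$, where $t_e := \tfrac{\beta}{2}\log(1/h)$ is just below the local Ehrenfest time. The negative imaginary part of $\omega^2$ turns the total propagation time into the gain $h^{2\sqrt E\beta\nu}$ that lands on $f_-$, while the forward escape of $\gamma$ ensures that the $k=0$ translate sits near $\gamma(0)$ in microlocal isolation from the other translates.

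Concretely, apply Lemma~\ref{l:basic-beam} with $(\tilde x_0,\tilde\xi_0)=(x_0,\xi_0)$, so that $\gamma^0=\gamma$, producing $u_0,f_0$ satisfying~\eqref{e:basic-1}--\eqref{e:basic-4}. Let $N=N(h)$ be the smallest integer with $(N-1)t_0\geq 2t_e$, and set
\begin{gather*}
u := \sum_{k=0}^{N-1} e^{ikt_0(P_h-\omega^2)/h}u_0,\\
f_+ := e^{-it_0(P_h-\omega^2)/h}f_0,\qquad
f_- := e^{i(N-1)t_0(P_h-\omega^2)/h}f_0.
\end{gather*}
Because $P_h$ commutes with its evolution, applying $(P_h-\omega^2)$ termwise and invoking~\eqref{e:basic-1} gives a telescoping sum equal to $h(f_+-f_-)+\mathcal O(h^\infty)_{L^2}$. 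Since $\omega^2 = E-2ih\nu\sqrt E+\mathcal O(h^2)$, the propagator satisfies $\|e^{is(P_h-\omega^2)/h}\|_{L^2\to L^2} = e^{-2s\nu\sqrt E+\mathcal O(h|s|)}$; this yields $\|f_+\|\leq C$, $\|f_-\|\leq Ce^{-4t_e\nu\sqrt E}=Ch^{2\sqrt E\beta\nu}$, and $\|u\|\leq C\sum_{k=0}^{N-1}e^{-2kt_0\nu\sqrt E}\leq C$ by a geometric series. The bound $\WFh(f_+)\subset\gamma([t_0/3,2t_0/3])$ follows from~\eqref{e:basic-wf} via Egorov.

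The remaining item is the microlocal lower bound in part~4. I would take $b:=b_u$ from~\eqref{e:basic-3}, so that $\|\Op_h(b)u_0\|\geq C^{-1}$ is automatic, and reduce to showing
$$
\Op_h(b)\,e^{ikt_0(P_h-\omega^2)/h}u_0 = \mathcal O(h^\infty)_{L^2},\qquad 1\leq k\leq N-1.
$$
Heuristically, the propagated state is microlocalized near $\gamma([-kt_0-2t_0/3,-kt_0+2t_0/3])$, whose central curve lies at distance at least $t_0/12$ from $\gamma([-t_0/4,t_0/4])\supset\supp b$ for $k\geq 1$; the bound then follows from~\eqref{e:disjoint} applied to a suitable conjugation of the cutoff $\Op_h(a_u)$ from~\eqref{e:basic-2}. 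The main obstacle, and the technical core of Section~\ref{s:long}, is to make this rigorous by extending Egorov's theorem~\eqref{e:gorov2} up to times $kt_0\sim 2t_e$ in a mildly exotic class $S^{\comp}_{\rho'}$. Here the Lyapunov condition~\eqref{e:beta-cond} enters decisively: the bound~\eqref{e:lmax} on the expansion rate of $de^{tH_p}$ along $\gamma$, together with $\lmax\beta<1$, is exactly what permits $\rho'$ to be chosen close to $\tfrac12$ so that the propagated symbol supports remain disjoint from $\supp b$ for small $h$. This will be implemented relative to a Riemannian structure on $T^*M$ adapted to the linearization of $e^{tH_p}$ along $\gamma$, as announced in the outline.
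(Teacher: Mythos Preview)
Your telescoping construction, the norm estimates for $f_\pm$, and the identification of the long-time Egorov step as the technical core are all on target. There is, however, a genuine gap in the choice of base point that costs you a factor of~$2$ in the admissible range of~$\beta$.

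You anchor the basic beam at $(\tilde x_0,\tilde\xi_0)=\gamma(0)$ and propagate \emph{backward} for times $kt_0$ with $k$ up to $N-1\approx 2t_e/t_0$. The localization argument for part~4 then requires controlling the conjugated cutoff $a_u\circ e^{kt_0H_p}$ (or an iterated version thereof) for $kt_0$ as large as $2t_e=\beta\log(1/h)$. Along $\gamma$ the differential of the flow expands by at most $e^{\lambda kt_0}$, so the $h^\rho$-tube in~\eqref{e:basic-2} swells to size $e^{\lambda\cdot 2t_e}h^\rho=h^{\rho-\lambda\beta}$. For the propagated supports to remain in an $o(1)$ neighborhood of the shifted segments (so that~\eqref{e:disjoint} applies and the symbols stay in a usable $S^{\comp}_{\rho'}$ class) you need $\rho>\lambda\beta$, hence $\lambda\beta<1/2$. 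That is strictly stronger than the hypothesis $\lmax\beta<1$ in~\eqref{e:beta-cond}, so your scheme proves the lemma only in the restricted range $\lmax\beta<1/2$.

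The paper avoids this by placing the short beam at the \emph{midpoint} $\gamma(-t_e)$ and propagating symmetrically in both time directions, $j\in[-N_0,N_0]$ with $N_0 t_0=t_e$; see~\eqref{e:base-point} and~\eqref{e:uf}. Then the maximal propagation time in either direction is only $t_e$, the tube expands to $h^{\rho-\lambda\beta/2}$, and one needs merely $\lambda\beta<2\rho$ as in~\eqref{e:tight}, recovering the full range $\lmax\beta<1$. The prefactor $h^{\sqrt E\beta\nu}$ in~\eqref{e:uf} is what rebalances the norms after this shift of base point. A secondary issue: the Schr\"odinger group $e^{isP_h/h}$ does not preserve compact support, so your $u$ need not lie in $C_0^\infty(M)$; the paper handles this by inserting a spatial cutoff~$\chi$ at each step (see~\eqref{e:chitin}), which is harmless microlocally but needed for the statement.
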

We start the proof of Lemma~\ref{l:long} by taking $t_0$ small enough so that 
Lemma~\ref{l:basic-beam} applies to
\begin{equation}
  \label{e:base-point}
(\tilde x_0,\tilde\xi_0):=\gamma\Big(-{\beta\over 2}\log(1/h)\Big),\quad
\gamma^0(t)=\gamma\Big(t-{\beta\over 2}\log(1/h)\Big).
\end{equation}
We also change $t_0$ slighly in an $h$-dependent way so that
$$
N_0:={\beta\over 2t_0}\log(1/h)
$$
is an integer.
Using~\eqref{e:beta-cond}, take $\lambda,\rho$ such that
\begin{equation}
  \label{e:tight}
\lambda>\lmax,\quad
\rho\in [0,1/2),\quad
\lambda\beta<2\rho.
\end{equation}
Let $u_0,f_0$ be the functions constructed in Lemma~\ref{l:basic-beam}.
Let $\chi\in C_0^\infty(M;[0,1])$ satisfy
\begin{equation}
  \label{e:chitin}
\chi=1\quad\text{near the closure of }\gamma\big((-\infty,2t_0]\big).
\end{equation}
For $j\in \mathbb Z$, define
$u_j=u_j(h),f_j=f_j(h)\in C_0^\infty(M)$ inductively starting from $u_0,f_0$:
$$
\begin{aligned}
u_{j+1}:=\chi e^{-it_0(P_h-\omega^2)/h} u_j,&\quad
f_{j+1}:=\chi e^{-it_0(P_h-\omega^2)/h}f_j,&\quad j\geq 0;\\
u_{j-1}:=\chi e^{it_0(P_h-\omega^2)/h}u_j,&\quad
f_{j-1}:=\chi e^{it_0(P_h-\omega^2)/h}f_j,&\quad j\leq 0.
\end{aligned}
$$
We now define
\begin{equation}
  \label{e:uf}
u:=h^{\sqrt{E}\beta\nu}\sum_{j=-N_0}^{N_0} u_j,\quad
f_+:=h^{\sqrt{E}\beta\nu}f_{N_0+1},\quad
f_-:=h^{\sqrt{E}\beta\nu}f_{-N_0}.
\end{equation}
Note that by~\eqref{e:omega},
$$
|e^{it_0N_0\omega^2/h}|=h^{-\sqrt{E}\beta\nu}.
$$
Therefore, part~1 of Lemma~\ref{l:long} is satisfied.

The remaining parts of Lemma~\ref{l:long} use the following localization
statement for $u_j,f_j$, proved in~\S\ref{s:localization} (see Figure~\ref{f:localization}):
\begin{lemm}
  \label{l:localization}
For each $j\in [-N_0,N_0+1]$, there exist $a_u^{(j)},b_u^{(j)},a_f^{(j)}\in S^{\comp}_\rho(T^*M)$,
bounded uniformly in $j$, such that, with remainders uniform in $j$,
\begin{align}
  \label{e:loca1}
u_j&=\Op_h(a_u^{(j)})u_j+\mathcal O(h^\infty)_{L^2},\\
  \label{e:loca2}
\supp a_u^{(j)}&\subset\gamma_{Ce^{|j|\lambda t_0}h^\rho}\Big(\Big[\Big(j-N_0-{2\over 3}\Big)t_0,\Big(j-N_0+{2\over 3}\Big)t_0\Big]\Big);\\
  \label{e:loca3}
\|\Op_h(b_u^{(j)})u_j\|_{L^2}&\geq C^{-1}e^{2\sqrt{E}\nu t_0j},\\
  \label{e:loca4}
\supp b_u^{(j)}&\subset\gamma_{Ce^{|j|\lambda t_0}h^\rho}\Big(\Big[\Big(j-N_0-{1\over 4}\Big)t_0,\Big(j-N_0+{1\over 4}\Big)t_0\Big]\Big);\\
  \label{e:loca5}
f_j&=\Op_h(a_f^{(j)})f_j+\mathcal O(h^\infty)_{L^2},\\
  \label{e:loca6}
\supp a_f^{(j)}&\subset\gamma_{Ce^{|j|\lambda t_0}h^\rho}\Big(\Big[\Big(j-N_0-{2\over 3}\Big)t_0,\Big(j-N_0-{1\over 3}\Big)t_0\Big]\Big);
\end{align}
where $C$ is independent of $h$ and $j$
and $\gamma_\varepsilon(U)$ denotes the $\varepsilon$-neighborhood of $\gamma(U)$.
\end{lemm}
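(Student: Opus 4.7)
The plan is to prove Lemma~\ref{l:localization} by induction on $|j|$, propagating symbols one time step at a time in each direction from the base case $j = 0$. The base case is immediate: applying Lemma~\ref{l:basic-beam} at the base point $(\tilde x_0, \tilde\xi_0) = \gamma(-N_0 t_0)$ from \eqref{e:base-point} produces symbols $a_u, b_u, a_f$ whose supports, translated by $\gamma^0(s) = \gamma(s - N_0 t_0)$, match \eqref{e:loca2}, \eqref{e:loca4}, \eqref{e:loca6} at $j = 0$, and \eqref{e:loca1}, \eqref{e:loca3}, \eqref{e:loca5} follow from \eqref{e:basic-2}--\eqref{e:basic-4}.

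For the forward step ($j \mapsto j+1$ with $0 \leq j \leq N_0$), I would define $a_u^{(j+1)}, b_u^{(j+1)}, a_f^{(j+1)} \in S_\rho^{\comp}$ as cutoffs that equal $1$ on neighborhoods of the flow images $e^{t_0 H_p}(\supp a_u^{(j)})$, etc., and are supported in the tubes prescribed by \eqref{e:loca2}, \eqref{e:loca4}, \eqref{e:loca6} for $j+1$. The admissibility of this choice -- that a tube of width $r$ is carried by $e^{t_0 H_p}$ into a tube of width at most $Ce^{\lambda t_0} r$ -- comes from the Lyapunov bound \eqref{e:lmax} applied with $\lambda > \lmax$: one has $\sup_{s\in I} \|de^{t_0 H_p}(\gamma(s))\| \leq C_\lambda e^{\lambda t_0}$ on any parameter segment $I$ contained in the trapped past, and this extends to a small $h^\rho$-neighborhood by continuity.

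For \eqref{e:loca1} at step $j+1$, I would use Egorov's theorem \eqref{e:gorov2} to commute $\Op_h(a_u^{(j)})$ through $e^{-it_0 P_h/h}$, producing a symbol $c \in S_\rho^{\comp}$ with $\supp c \subset e^{t_0 H_p}(\supp a_u^{(j)})$. Since $\chi = 1$ on a neighborhood of $\supp c$ by \eqref{e:chitin} (as this set lies in $\gamma((-\infty, 2t_0])$ for $j \leq N_0$) and $a_u^{(j+1)} \equiv 1$ near $\supp c$ by construction, \eqref{e:disjoint} yields $(1 - \Op_h(a_u^{(j+1)})) u_{j+1} = \mathcal O(h^\infty)_{L^2}$; the same template gives \eqref{e:loca5}. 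For \eqref{e:loca3}, I would apply Egorov in the opposite direction: $\Op_h(b_u^{(j+1)}) e^{-it_0 P_h/h} = e^{-it_0 P_h/h}\Op_h(\tilde b) + \mathcal O(h^\infty)$ with $\tilde b$ having principal symbol $b_u^{(j+1)} \circ e^{t_0 H_p}$, which equals $1$ on a neighborhood of $\supp b_u^{(j)}$. Combining unitarity of $e^{-it_0 P_h/h}$ with $|e^{it_0\omega^2/h}| = e^{2\sqrt{E}\nu t_0}(1 + \mathcal O(h))$ from \eqref{e:omega}, and using \eqref{e:disjoint} together with \eqref{e:loca1} at step $j$ to replace $\Op_h(\tilde b) u_j$ by $u_j$ modulo $\mathcal O(h^\infty)$, one obtains $\|\Op_h(b_u^{(j+1)}) u_{j+1}\| \geq e^{2\sqrt{E}\nu t_0} \|\Op_h(b_u^{(j)}) u_j\|(1 + \mathcal O(h)) - \mathcal O(h^\infty)$, which iterates to the required bound. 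The backward induction ($j \mapsto j-1$ for $j \leq 0$) is the mirror image using $e^{+it_0(P_h - \omega^2)/h}$; here \eqref{e:lmax} applies cleanly since the entire segment remains in the trapped past.

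The principal obstacle is uniformity of all symbol seminorms across $N_0 \sim \log(1/h)$ iterations: at $|j| = N_0$ the tube widths are $Ce^{N_0 \lambda t_0} h^\rho = Ch^{\rho - \lambda\beta/2}$, and the condition $\lambda\beta < 2\rho$ from \eqref{e:tight} is precisely what guarantees $\rho - \lambda\beta/2 > 0$, so each derivative on a cutoff costs at most $h^{-\rho}$ and the symbols stay in $S_\rho^{\comp}$ with bounded seminorms. A related technical point is that the Lyapunov inequality \eqref{e:lmax} is saturated at $s + t = 0$: the one or two steps crossing $\gamma(0)$ (including $j = N_0 + 1$ for $a_f$) require replacing \eqref{e:lmax} by continuity of $de^{t_0 H_p}$ on a fixed compact neighborhood of $\gamma([0, 2t_0])$. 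Finally, a little extra margin must be built into the nested cutoffs $b_u^{(j)} \subset a_u^{(j)}$ so that the use of \eqref{e:disjoint} in the lower bound step survives the $\mathcal O(h^{1-2\rho})_{S_\rho^{\comp}}$ symbol error inherent in Egorov's theorem for exotic symbols.
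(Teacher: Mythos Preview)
Your overall inductive scheme---base case from Lemma~\ref{l:basic-beam}, one-step propagation via Egorov's theorem~\eqref{e:gorov2}, and the use of~\eqref{e:disjoint} to pass from one cutoff to the next---matches the paper's argument. However, there is a genuine gap in the construction of the nested symbols.

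You assert that $e^{t_0H_p}$ carries a tube of width $r$ into a tube of width at most $Ce^{\lambda t_0}r$, citing the Lyapunov bound~\eqref{e:lmax}. The trouble is the constant $C$: iterating $N_0\sim\log(1/h)$ times compounds it to $C^{N_0}$, which is a negative power of $h$, so the tubes blow up and the support conditions~\eqref{e:loca2}--\eqref{e:loca6} fail. Equivalently, for the nesting $e^{t_0H_p}(\supp a_u^{(j)})\subset\{a_u^{(j+1)}=1\}$ to hold with both sets in tubes of width proportional to $e^{|j|\lambda t_0}h^\rho$, you need the \emph{one-step} expansion to satisfy $\|de^{t_0H_p}(\gamma(s))\|\leq e^{\lambda t_0}$ with no multiplicative constant. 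The raw definition~\eqref{e:lmax} does not give this: for small $t_0$ the one-step norm in a generic metric is $1+O(t_0)$ with a coefficient governed by $dH_p$, not by $\lmax$. The paper resolves this by first constructing adapted Riemannian metrics $\tilde g_\pm$ on $T^*M$ (via an averaging trick, integrating a fixed metric along the flow) in which $|de^{\pm t_0H_p}(\gamma(t))v|_{\tilde g_\pm}\leq e^{\lambda_1 t_0}|v|_{\tilde g_\pm}$ holds with constant exactly~$1$; the tubes and cutoffs are then built in tubular coordinates coming from the $\tilde g_\pm$-exponential map, and the intermediate exponents $\lambda_1<\lambda_2<\lambda$ supply the margin needed for nesting.

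A second, smaller issue is your inductive step for the lower bound~\eqref{e:loca3}. Applying Egorov to $b_u^{(j+1)}$ gives a symbol $\tilde b$ whose \emph{principal} part equals $1$ on $\supp b_u^{(j)}$, but not the full symbol; so you cannot ``replace $\Op_h(\tilde b)u_j$ by $u_j$'' via~\eqref{e:disjoint}, and in any case $\tilde b$ is not $1$ on all of $\supp a_u^{(j)}$ where $u_j$ lives. The paper instead applies Egorov to $b_u^{(j)}$, obtaining $\tilde b_u^{(j)}$ with $\supp\tilde b_u^{(j)}\subset e^{t_0H_p}(\supp b_u^{(j)})\subset\{b_u^{(j+1)}=1\}$; then~\eqref{e:disjoint} inserts $\Op_h(b_u^{(j+1)})$ for free, and the remaining factor $\|\Op_h(\tilde b_u^{(j)})\|_{L^2\to L^2}$ is bounded by $1+Ch^{1/2-\rho}$ using the sharp operator-norm estimate for $\Op_h^0$ together with $|b_u^{(j)}|\le 1$. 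This is what prevents the accumulation of constants in the lower bound over $N_0$ steps.
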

\begin{figure}
\includegraphics{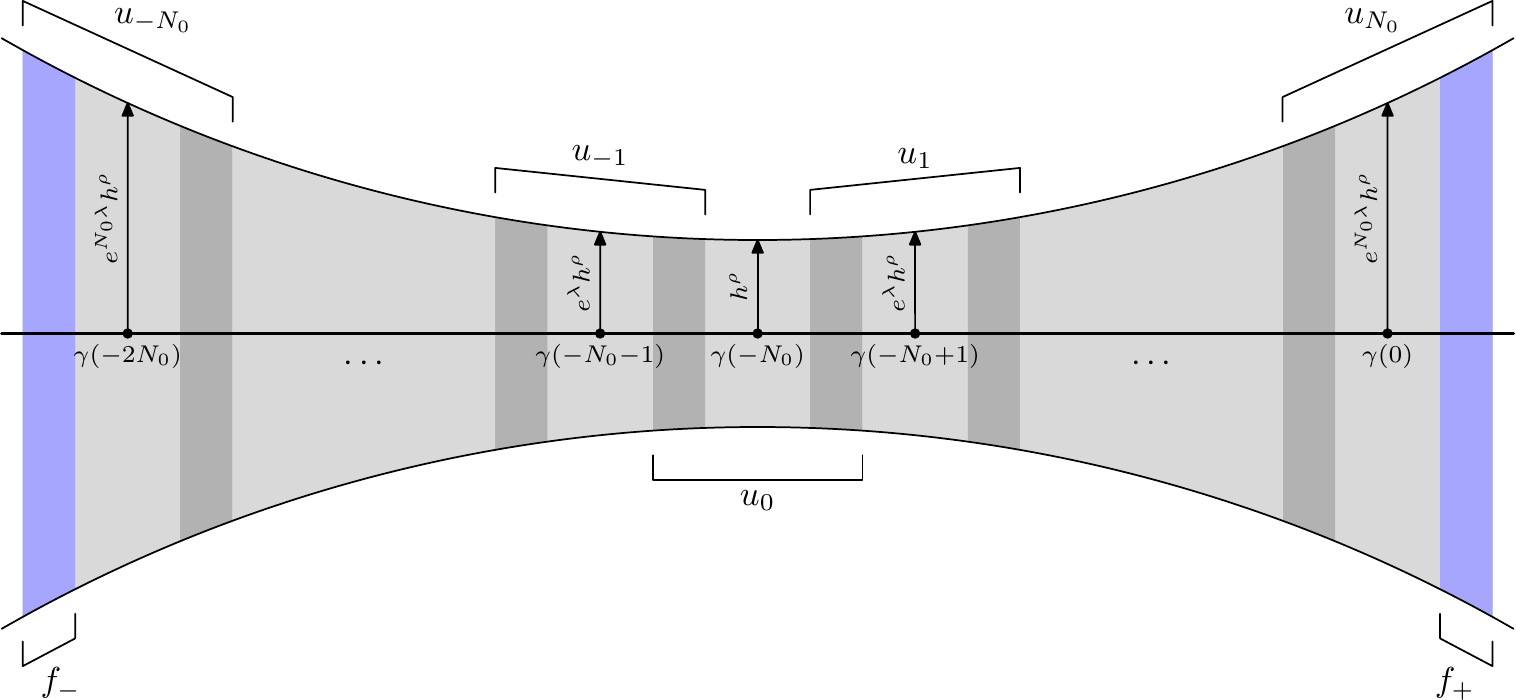}
\caption{The shaded region represents microlocal concentration of the function
$u$ from~\eqref{e:uf}, where we put $t_0=1$ for simplicity of notation.
The darker regions represent the places where the summands $u_j$ and $u_{j+1}$
overlap, and the blue regions at the ends correspond to~$f_\pm$.}
\label{f:localization}
\end{figure}
We remark that by~\eqref{e:tight},
$$
Ce^{N_0\lambda t_0}h^\rho=Ch^{\rho-{\lambda\beta\over 2}}\to 0\quad\text{as }h\to 0,
$$
therefore the sets in~\eqref{e:loca2}, \eqref{e:loca4}, and~\eqref{e:loca6}
are contained in $o(1)$ neighborhoods of the corresponding segments of $\gamma$.

Given Lemma~\ref{l:localization}, we claim that uniformly in $j\in [-N_0,N_0]$,
\begin{equation}
  \label{e:coma}
(P_h-\omega^2)u_j=h(f_{j+1}-f_j)+\mathcal O(h^\infty)_{L^2}.
\end{equation}
For $j=0$, \eqref{e:coma} follows from~\eqref{e:basic-1} and
the following corollary of~\eqref{e:gorov2},
\eqref{e:chitin}, and~\eqref{e:loca5}:
\begin{equation}
  \label{e:chitin2}
(1-\chi)e^{-it_0(P_h-\omega^2)/h}f_0=\mathcal O(h^\infty)_{L^2}.
\end{equation}
Now, assume that~\eqref{e:coma} holds for
some $j\in [0,N_0-1]$. Then
$$
\begin{aligned}
(P_h-\omega^2)u_{j+1}&=[P_h,\chi]e^{-it_0(P_h-\omega^2)/h}u_j
+\chi e^{-it_0(P_h-\omega^2)/h}(P_h-\omega^2)u_j\\
&=[P_h,\chi]e^{-it_0(P_h-\omega^2)/h}u_j
+\chi e^{-it_0(P_h-\omega^2)/h}h(f_{j+1}-f_j)+\mathcal O(h^\infty)_{L^2}.
\end{aligned}
$$
The first term on the right-hand side is $\mathcal O(h^\infty)_{L^2}$
as follows from~\eqref{e:gorov2}, \eqref{e:chitin}, and~\eqref{e:loca1}.
The second term is equal to $h(f_{j+2}-f_{j+1})$; therefore,
we see that~\eqref{e:coma} holds for $j+1$.
Arguing by induction on $j=0,\dots,N_0-1$ (since
the number of iterations is bounded by a constant times $\log(1/h)$, it is easy to verify that
the $\mathcal O(h^\infty)$ remainder is uniform in $j$), we obtain~\eqref{e:coma}
for all $j\in [0,N_0]$. Arguing similarly, we obtain~\eqref{e:coma}
for all $j\in [-N_0,-1]$ as well; here the case $j=-1$ has to be handled separately
using the following corollary of~\eqref{e:chitin}, \eqref{e:loca5}, and~\eqref{e:chitin2}:
$$
\chi e^{it_0(P_h-\omega^2)/h}f_1=f_0+\mathcal O(h^\infty)_{L^2}.
$$
Adding together~\eqref{e:coma} for all $j=-N_0,\dots,N_0$, we obtain part~2 of Lemma~\ref{l:long}. Part~3 of Lemma~\ref{l:long} follows immediately from~\eqref{e:loca5}.

Finally, for part~4 of Lemma~\ref{l:long}, we put $b:=b^{(N_0)}_u$.
By~\eqref{e:loca3}, we have $\|\Op_h(b)u\|_{L^2}\geq C^{-1}$
as long as
$$
\Op_h(b)u_j=\mathcal O(h^\infty)_{L^2}\quad\text{uniformly in }
j\in [-N_0,N_0-1].
$$
This follows from~\eqref{e:loca1} and the following statement:
\begin{equation}
  \label{e:soup}
\supp b\cap\supp a_u^{(j)}=\emptyset\quad\text{for $h$ small enough and all }j\in [-N_0,N_0-1].
\end{equation}
The identity~\eqref{e:soup} follows from~\eqref{e:loca2}, \eqref{e:loca4}
and the fact that there exists $\varepsilon>0$ such that
\begin{equation}
  \label{e:soup2}
d(\gamma(t_1),\gamma(t_2))>\varepsilon\quad\text{for all }
t_1\in \Big[-{t_0\over 4},{t_0\over 4}\Big],\
t_2\in\Big(-\infty, -{t_0\over 3}\Big].
\end{equation}
To show~\eqref{e:soup2}, we note that $\gamma(t)$ is not trapped in the forward
direction, thus it is not a closed trajectory;
it follows that $\gamma(t_1)\neq \gamma(t_2)$ for $t_2\leq -t_0/3<-t_0/4\leq t_1$.
It remains to show that for each $t_j\to -\infty$,
$\gamma(t_j)$ cannot converge to a point in $\gamma([-t_0/4,t_0/4])$;
this follows from the fact that $\gamma([-t_0/4,t_0/4])$ does
not intersect the trapped set, but the backwards
trapped trajectory $\gamma(t)$ converges to the trapped
set as $t\to-\infty$~-- see for instance~\cite[Lemma~4.1]{nhp}.
This finishes the proof of Lemma~\ref{l:long}.

\subsection{Localization of the long beam}
  \label{s:localization}

We now prove Lemma~\ref{l:localization}. Fix
$\lambda_1,\lambda_2$ such that
$$
\lmax<\lambda_1<\lambda_2<\lambda.
$$
We start by constructing metrics on $T^*M$ which are adapted to the flow
$e^{tH_p}$ on the trajectory $\gamma$:
\begin{lemm}
  \label{l:adapted}
There exist smooth $h$-independent Riemannian metrics $\tilde g_\pm$
on $T^*M$ such that
\begin{equation}
  \label{e:adapted}
|de^{\pm t_0H_p}(\gamma(t))v|_{\tilde g_\pm}\leq e^{\lambda_1 t_0}|v|_{\tilde g_\pm},\quad
t\in (-\infty,0],\
v\in T_{\gamma(t)}(T^*M).
\end{equation}
\end{lemm}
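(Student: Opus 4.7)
The plan is to build $\tilde g_\pm$ as Mather-type adapted metrics. Fix a smooth background Riemannian metric $g_0$ on $T^*M$, pick auxiliary exponents $\lmax<\lambda_0<\mu<\lambda_1$, and for some large integer $N$ to be specified, set
\[
|v|^2_{\tilde g_\pm,p}\;:=\;\sum_{k=0}^{N-1}e^{-2\mu k t_0}\,|de^{\pm kt_0H_p}(p)v|^2_{g_0},\qquad p\in T^*M,\ v\in T_p(T^*M).
\]
Since $V$ is compactly supported and $p$ is conserved by the flow, $e^{tH_p}$ is a complete smooth flow on $T^*M$; hence every summand is a smooth quadratic form in $v$, the $k=0$ term makes $\tilde g_\pm$ positive definite, and $\tilde g_\pm$ is a smooth Riemannian metric on $T^*M$.

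A straightforward telescoping computation gives, for every $t$,
\[
|de^{\pm t_0H_p}(\gamma(t))v|^2_{\tilde g_\pm,\gamma(t\pm t_0)}\;=\;e^{2\mu t_0}\Big(|v|^2_{\tilde g_\pm,\gamma(t)}\,-\,|v|^2_{g_0}\,+\,e^{-2\mu Nt_0}|de^{\pm Nt_0H_p}(\gamma(t))v|^2_{g_0}\Big).
\]
Thus, if one can prove the uniform estimate $\|de^{\pm Nt_0H_p}(\gamma(t))\|_{g_0}\leq e^{\mu Nt_0}$ for every $t\leq 0$, then the last term in parentheses is dominated by $|v|^2_{g_0}$, the whole parenthesized expression is $\leq |v|^2_{\tilde g_\pm,\gamma(t)}$, and we obtain $|de^{\pm t_0H_p}v|^2_{\tilde g_\pm}\leq e^{2\mu t_0}|v|^2_{\tilde g_\pm}\leq e^{2\lambda_1 t_0}|v|^2_{\tilde g_\pm}$, which is the claim. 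So everything reduces to producing any preliminary bound $\|de^{\pm Nt_0H_p}(\gamma(t))\|_{g_0}\leq C_0\,e^{\lambda_0 Nt_0}$ valid uniformly for $t\leq 0$, and then choosing $N$ so large that $C_0\leq e^{(\mu-\lambda_0)Nt_0}$.

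For $\tilde g_-$, this preliminary bound is immediate from \eqref{e:lmax} applied with $s=t$ and $\tau=-Nt_0$, since $\gamma(t-Nt_0)$ is still past-trapped. For $\tilde g_+$, however, the case $t\in[-Nt_0,0]$ is genuinely harder, because $\gamma(t+Nt_0)$ has already escaped into the noncompact region and \eqref{e:lmax} no longer applies directly; this is the main obstacle. I resolve it by factoring
\[
de^{Nt_0H_p}(\gamma(t))\;=\;de^{(Nt_0+t)H_p}(\gamma(0))\,\circ\,de^{-tH_p}(\gamma(t)),
\]
bounding the second factor by $C_\lambda e^{\lambda Nt_0}$ via \eqref{e:lmax} for any $\lambda\in(\lmax,\lambda_0)$ (valid since $-t\in[0,Nt_0]$ and $\gamma(t)$ is past-trapped), and controlling the first factor by a polynomial in $Nt_0$: once $\gamma$ enters the Euclidean end the Hamiltonian flow of $|\xi|^2$ is the linear motion $(x,\xi)\mapsto(x+2\tau\xi,\xi)$, whose differential has operator norm $\leq 1+2|\tau|$, so $\|de^{\tau H_p}(\gamma(0))\|_{g_0}\leq C(1+\tau)$ for all $\tau\geq 0$. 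The polynomial factor is absorbed into $e^{(\lambda_0-\lambda)Nt_0}$ once $N$ is sufficiently large, yielding the required uniform estimate and completing the construction.
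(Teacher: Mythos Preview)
Your proof is correct and uses the same Mather-type averaging idea as the paper, but the implementation differs in one instructive way. The paper defines both metrics by averaging along the \emph{backward} flow,
\[
\langle u,v\rangle_{\tilde g_\pm}(x,\xi)=\int_0^T e^{\pm 2\lambda_1 s}\,\big|de^{-sH_p}(x,\xi)v\big|^2_{\tilde g_0}\,ds,
\]
and then carries out a continuous telescoping. Because the average only involves $e^{-sH_p}$ with $s\ge 0$, evaluating at $\gamma(t)$ for $t\le 0$ keeps every point in the past-trapped part of the trajectory, so the paper never has to confront the escape of $\gamma$ in forward time; the only input needed is the bound $\|de^{\pm TH_p}(\gamma(t'))\|\le e^{\lambda_1 T}$ at points $t'$ with $t'\pm T\le t_0$, which follows from~\eqref{e:lmax} after absorbing the constant by taking $T$ large.

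Your discrete version averages along the $\pm$ flow itself, which is equally legitimate but, for the $+$ sign, forces you to control $de^{Nt_0H_p}(\gamma(t))$ when $t+Nt_0$ lies far out in the escaping region. You handle this correctly by factoring through $\gamma(0)$ and using that the linearised free flow in the Euclidean end grows only linearly; that polynomial factor is then swallowed by the margin $\lambda_0-\lambda$. (One small point: for this step you should take the background metric $g_0$ to coincide with the Euclidean metric near infinity, so that the operator norms along the escaping ray are comparable to the Euclidean ones; otherwise the inequality $\|de^{\tau H_p}(\gamma(0))\|_{g_0}\le C(1+\tau)$ need not hold.) The paper's backward-flow choice simply sidesteps this whole discussion, which is its main advantage; your approach, on the other hand, makes the dependence on the dynamics in the Euclidean end explicit.
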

\begin{proof}
Fix a Riemannian metric $\tilde g_0$ on $T^*M$. By~\eqref{e:lmax},
for $T>0$ large enough
\begin{align}
  \label{e:T+}
|de^{TH_p}(\gamma(t))v|_{\tilde g_0}\leq e^{\lambda_1 T}|v|_{\tilde g_0},&\quad
t\in (-\infty,t_0-T],\quad
v\in T_{\gamma(t)}(T^*M);\\
  \label{e:T-}
|de^{-TH_p}(\gamma(t))v|_{\tilde g_0}\leq e^{\lambda_1 T}|v|_{\tilde g_0},&\quad
t\in (-\infty,t_0],\quad 
v\in T_{\gamma(t)}(T^*M).
\end{align}
Define the metrics $\tilde g_\pm$ as follows:
for $(x,\xi)\in T^*M$ and $u,v\in T_{(x,\xi)}(T^*M)$, put
$$
\langle u,v\rangle_{\tilde g_\pm(x,\xi)}:=
\int_0^T e^{\pm 2\lambda_1 s}\langle de^{-sH_p}(x,\xi)u,de^{-sH_p}(x,\xi)v\rangle_{\tilde g_0(e^{-sH_p}(x,\xi))}\,ds.
$$
Take $t\leq 0$ and $v\in T_{\gamma(t)}(T^*M)$. Then
$$
|de^{t_0H_p}(\gamma(t))v|^2_{\tilde g_+}=
\int_0^T e^{2\lambda_1 s}|de^{(t_0-s)H_p}(\gamma(t))v|^2_{\tilde g_0}\,ds,
$$
therefore
$$
\begin{gathered}
|de^{t_0H_p}(\gamma(t))v|^2_{\tilde g_+}-
e^{2\lambda_1t_0}|v|^2_{\tilde g_+}\\
=
\int_0^T e^{2\lambda_1 s}|de^{(t_0-s)H_p}(\gamma(t))v|^2_{\tilde g_0}\,ds
-\int_{t_0}^{T+t_0} e^{2\lambda_1 s}|de^{(t_0-s)H_p}(\gamma(t))v|^2_{\tilde g_0}\,ds\\
=\int_0^{t_0}e^{2\lambda_1s}\Big(
|de^{(t_0-s)H_p}(\gamma(t))v|_{\tilde g_0}^2
-e^{2\lambda_1T}|de^{(t_0-s-T)H_p}(\gamma(t))v|_{\tilde g_0}^2\Big)\,ds\leq 0,
\end{gathered}
$$
where the last inequality follows from~\eqref{e:T+} with $t,v$ replaced
by $t_0-s-T+t$, $de^{(t_0-s-T)H_p}(\gamma(t))v$. This proves the `$+$' part of~\eqref{e:adapted}.

We similarly have
$$
\begin{gathered}
|de^{-t_0H_p}(\gamma(t))v|^2_{\tilde g_-}
-e^{2\lambda_1 t_0}|v|^2_{\tilde g_-}\\
=\int_{-t_0}^0 e^{-2\lambda_1s}\Big(
e^{-2\lambda_1T}|de^{-(t_0+T+s)H_p}(\gamma(t))v|^2_{\tilde g_0}
-|de^{-(t_0+s)H_p}(\gamma(t))v|^2_{\tilde g_0}
\Big)\,ds\leq 0,
\end{gathered}
$$
where the last inequality follows from~\eqref{e:T-} with $t,v$ replaced
by $t-t_0-s$, $de^{-(t_0+s)H_p}(\gamma(t))v$.
This proves the `$-$' part of~\eqref{e:adapted}.
\end{proof}
We next construct tubular neighborhoods of segments of $\gamma$.
Fix small $\delta>0$ to be chosen later.
For each $t\leq t_0$, define the manifold
$$
V^\pm_t=\{(s,v)\mid s\in (-t_0,t_0),\ v\in T_{\gamma(t+s)}(T^*M),\
v\perp_{\tilde g_\pm} H_p(\gamma(t+s)),\ |v|_{\tilde g_\pm}<\delta\}.
$$
Define the maps
$$
\Phi^\pm_t:V^\pm_t\to T^*M,\quad
\Phi^\pm_t(s,v)=\exp^{\tilde g_\pm}_{\gamma(t+s)}(v),
$$
where $\exp^{\tilde g_\pm}_\bullet(\bullet)$ denotes the geodesic exponential map
of the metric $\tilde g_\pm$.
By~\eqref{e:regval}, for~$t_0$ and $\delta$ small enough the maps $\Phi^\pm_t$ are diffeomorphisms
onto their images uniformly in~$t\leq t_0$. Note that
$\Phi^\pm_t(s,0)=\gamma(t+s)$.
\begin{lemm}
For $\varepsilon>0$ small enough and
all
$$
t\leq 0,\quad
(s,v)\in V^\pm_t,\quad
|s|\leq {3t_0\over 4},\quad
|v|_{\tilde g_\pm}\leq \varepsilon,
$$
there exist unique $(S_\pm,v_\pm)\in V^\pm_{t\pm t_0}$ such that for
some global constant $C$,
\begin{equation}
  \label{e:tube}
e^{\pm t_0H_p}(\Phi^\pm_t(s,v))=\Phi^\pm_{t\pm t_0}(S_\pm,v_\pm),\quad
|S_\pm-s|\leq C|v|_{\tilde g_\pm},\quad
|v_\pm|_{\tilde g_\pm}\leq e^{\lambda_2t_0}|v|_{\tilde g_\pm}.
\end{equation}
\end{lemm}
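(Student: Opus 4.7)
The plan is to realize the assignment $(s, v) \mapsto (S_\pm, v_\pm)$ as the transfer map
$$
\Psi^\pm_t := (\Phi^\pm_{t \pm t_0})^{-1} \circ e^{\pm t_0 H_p} \circ \Phi^\pm_t
$$
between the two tubular charts, and to read off the claimed bounds from its derivative at the zero section $\{v = 0\}$. Existence and uniqueness come for free once we check that $e^{\pm t_0 H_p}(\Phi^\pm_t(s, v))$ lies in the image of $\Phi^\pm_{t \pm t_0}$ whenever $|s| \leq 3t_0/4$ and $|v|_{\tilde g_\pm} \leq \varepsilon$. Since $\Phi^\pm_{t \pm t_0}$ is a diffeomorphism onto a $\delta$-tubular neighborhood of $\gamma((t \pm t_0 - t_0,\, t \pm t_0 + t_0))$ and $e^{\pm t_0 H_p}(\gamma(t + s)) = \gamma(t \pm t_0 + s)$ with $|s| \leq 3t_0/4 < t_0$, the containment follows from continuity of the time-$t_0$ flow for $v$ small, uniformly in $t \leq 0$ because $\gamma((-\infty, 2t_0])$ is precompact.

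Next, I would differentiate $\Psi^\pm_t$ at $(s, 0)$, using that $\Psi^\pm_t(s, 0) = (s, 0)$. The fact that $\exp^{\tilde g_\pm}$ has differential the identity at the origin of each tangent space gives $d\Phi^\pm_t|_{(s, 0)}(\partial_s) = H_p(\gamma(t + s))$ and $d\Phi^\pm_t|_{(s, 0)}(0, w) = w$ for $w$ in $H_p(\gamma(t + s))^\perp_{\tilde g_\pm}$. Differentiating the defining relation $\Phi^\pm_{t \pm t_0} \circ \Psi^\pm_t = e^{\pm t_0 H_p} \circ \Phi^\pm_t$ in the $v$-direction at $(s, 0)$ and decomposing the resulting vector $\tilde g_\pm$-orthogonally along $H_p(\gamma(t \pm t_0 + s))$ and its orthogonal complement identifies $\partial_v v_\pm \cdot v$ with the orthogonal projection of $de^{\pm t_0 H_p}(\gamma(t + s)) v$ onto $H_p^\perp$ and $\partial_v S_\pm \cdot v$ with the coefficient along $H_p$. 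The adapted-metric bound~\eqref{e:adapted} then yields $|\partial_v v_\pm \cdot v|_{\tilde g_\pm} \leq e^{\lambda_1 t_0} |v|_{\tilde g_\pm}$, and since $|H_p|_{\tilde g_\pm}$ is bounded away from zero on the precompact closure of $\gamma((-\infty, 2t_0])$ (using $dp \neq 0$ on $p^{-1}(E)$), we also obtain $|\partial_v S_\pm \cdot v| \leq C_0 |v|_{\tilde g_\pm}$ with $C_0$ independent of $t \leq 0$ and $|s| \leq 3t_0/4$.

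To pass from these first-order bounds to the lemma's nonlinear statements, I would apply Taylor's theorem in $v$ to the smooth map $v \mapsto \Psi^\pm_t(s, v)$. Because $\gamma((-\infty, 2t_0])$ stays in a single compact subset of $T^*M$, the building blocks $e^{\pm t_0 H_p}$, $\Phi^\pm_t$ and $(\Phi^\pm_{t\pm t_0})^{-1}$ are $C^2$-bounded uniformly in $t$, producing a remainder estimate $|\Psi^\pm_t(s, v) - (s, 0) - d_v\Psi^\pm_t|_{(s, 0)} v| \leq C_1 |v|_{\tilde g_\pm}^2$ with $C_1$ independent of $t$ and $s$. Combining with the linear bounds gives
$$
|v_\pm|_{\tilde g_\pm} \leq e^{\lambda_1 t_0}|v|_{\tilde g_\pm} + C_1 |v|_{\tilde g_\pm}^2, \quad |S_\pm - s| \leq C_0 |v|_{\tilde g_\pm} + C_1 |v|_{\tilde g_\pm}^2,
$$
and shrinking $\varepsilon$ so that $C_1 \varepsilon \leq e^{\lambda_2 t_0} - e^{\lambda_1 t_0}$ absorbs the nonlinear term into the slack $\lambda_2 > \lambda_1$, finishing both estimates in~\eqref{e:tube} with the constant $C := C_0 + C_1 \varepsilon$.

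The main obstacle, and the only place the trapping hypothesis really bites, is the uniformity of every constant in $t \leq 0$ as $t \to -\infty$. This is precisely what past-trapping supplies: all the relevant segments $\gamma([t - t_0,\, t + 2t_0])$ lie in one fixed compact subset of $T^*M$, on which $H_p$, $\tilde g_\pm$, the time-$t_0$ flow, and the normal exponential map are smooth with uniformly bounded derivatives of every order, while the adapted-metric estimate~\eqref{e:adapted} supplies the linearization bound with a global exponent $\lambda_1 < \lambda_2$.
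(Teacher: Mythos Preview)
Your argument is correct and is essentially the paper's own proof: both compute the transfer map $(s,v)\mapsto(S_\pm,v_\pm)$ on the zero section, identify its $v$-derivative via the $\tilde g_\pm$-orthogonal decomposition of $de^{\pm t_0H_p}(\gamma(t+s))$ along $H_p$ and $H_p^\perp$, invoke~\eqref{e:adapted} for the $e^{\lambda_1 t_0}$ linearized bound, and then use uniform control of higher derivatives (coming from precompactness of $\gamma((-\infty,2t_0])$) to absorb the nonlinear remainder into the slack $\lambda_2>\lambda_1$. The paper compresses your Taylor step into the single sentence ``since all derivatives of $\Phi^\pm_t$ and its inverse are bounded uniformly in $t$, it follows that \dots'', but the content is the same.
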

\begin{proof}
For $v=0$, we have
$$
S_\pm(s,0)=s,\quad
v_\pm(s,0)=0.
$$
Since all derivatives of $\Phi^\pm_t$ and its inverse are bounded uniformly in $t$,
we deduce the existence and uniqueness of $S_\pm(s,v),v_\pm(s,v)$ for $|s|\leq {3\over 4}t_0$
and $|v|$ small enough.

Next, note that
$$
\partial_s S_\pm(s,0)=1,\quad
\partial_s v_\pm(s,0)=0.
$$
Also, if $w\in T_{\gamma(t+s)}(T^*M)$ and $w\perp_{\tilde g_\pm} H_p(\gamma(t+s))$, then
$$
\partial_v S_\pm(s,0)w=\zeta_\pm,\quad
\partial_v v_\pm(s,0)w= w_\pm,
$$
where $\zeta_\pm\in\mathbb R$, $w_\pm\in T_{\gamma(t\pm t_0+s)}(T^*M)$,
$w_\pm\perp_{\tilde g_\pm} H_p(\gamma(t\pm t_0+s))$, are determined uniquely from the equation
$$
de^{\pm t_0H_p}(\gamma(t+s))w=\zeta_\pm H_p(\gamma(t\pm t_0+s))+w_\pm.
$$
By~\eqref{e:adapted}, we have
$$
|\zeta_\pm|\leq C|w|_{\tilde g_\pm},\quad
|w_\pm|_{\tilde g_\pm}\leq e^{\lambda_1 t_0}|w|_{\tilde g_\pm}.
$$
Since all derivatives of $\Phi^\pm_t$ and its inverse are bounded uniformly in $t$,
it follows that for $\varepsilon>0$ small enough
and $|v|_{\tilde g_\pm}\leq\varepsilon$, the inequalities in~\eqref{e:tube} hold.
\end{proof}
We now construct the functions $a^{(j)}_u,b^{(j)}_u,a^{(j)}_f$
from Lemma~\ref{l:localization}. Let $C_0>0$ be a large fixed constant.
For $t_1<t_2$ and $j\in [-N_0,N_0]$, define the functions
$$
\psi^{(j)}_{[t_1,t_2]}(s)=\psi_0\Big({t-t_1\over C_0^2e^{|j|\lambda t_0}h^\rho}\Big)
\psi_0\Big({t_2-t\over C_0^2e^{|j|\lambda t_0}h^\rho}\Big),\quad s\in\mathbb R,
$$
where $\psi_0\in C^\infty(\mathbb R;[0,1])$ satisfies
$\psi_0=0$ near $(-\infty, -1]$ and $\psi_0=1$ near $[-e^{-\lambda_2 t},\infty)$.
Note that for fixed $t_1,t_2$, the function $\psi^{(j)}_{[t_1,t_2]}$ is in $S^{\comp}_\rho$
uniformly in $j$ and
\begin{align}
\label{e:supsi-1}
\supp\psi^{(j)}_{[t_1,t_2]}&\subset \big(t_1-C_0^2e^{|j|\lambda t_0}h^\rho,t_2+C_0^2e^{|j|\lambda t_0}h^\rho\big),\\
\label{e:supsi-2}
\psi^{(j)}_{[t_1,t_2]}=1\quad&\text{near }\big[t_1-C_0^2 e^{(|j|\lambda-\lambda_2) t_0}h^\rho,t_2
+C_0^2e^{(|j|\lambda -\lambda_2)t_0}h^\rho\big].
\end{align}
Next, take $\chi_0\in C_0^\infty\big((-e^{\lambda t_0}, e^{\lambda t_0});[0,1]\big)$ such that
$\chi_0=1$ near $[-e^{\lambda_2 t_0}, e^{\lambda_2 t_0}]$, and put
$$
\chi^{(j)}(r)=\chi_0\Big({r\over C_0e^{|j|\lambda t_0}h^\rho}\Big),\quad
r\in\mathbb R,\quad
j\in [-N_0,N_0];
$$
note that $\chi^{(j)}\in S^{\comp}_\rho$ uniformly in $j$ and
\begin{align}
\label{e:supchi-1}
\supp\chi^{(j)}&\subset \{|r|<C_0e^{(|j|+1)\lambda t_0}h^\rho\},\\
\label{e:supchi-2}
\chi^{(j)}=1\quad&\text{near }\{|r|\leq C_0e^{(|j|\lambda+\lambda_2)t_0}h^\rho\}.
\end{align}
We define $a^{(j)}_u,b^{(j)}_u,a^{(j)}_f$ for $j\in [0,N_0+1]$ as follows:
$$
\begin{aligned}
a^{(j)}_u,b^{(j)}_u,a^{(j)}_f&\in C_0^\infty(\Phi^+_{(j-N_0)t_0}(V^+_{(j-N_0)t_0})),\\
a^{(j)}_u(\Phi^+_{(j-N_0)t_0}(s,v))&=\psi^{(j)}_{[-2t_0/3,2t_0/3]}(s)\cdot \chi^{(j)}\big(|v|_{\tilde g_+}\big),\\
b^{(j)}_u(\Phi^+_{(j-N_0)t_0}(s,v))&=\psi^{(j)}_{[-t_0/4,t_0/4]}(s)\cdot \chi^{(j)}\big(|v|_{\tilde g_+}\big),\\
a^{(j)}_f(\Phi^+_{(j-N_0)t_0}(s,v))&=\psi^{(j)}_{[-2t_0/3,-t_0/3]}(s)\cdot \chi^{(j)}\big(|v|_{\tilde g_+}\big).
\end{aligned}
$$
The resulting symbols are in $S^{\comp}_\rho(T^*M;[0,1])$ uniformly in $j$;
moreover, by~\eqref{e:supsi-1} and~\eqref{e:supchi-1}
the support conditions~\eqref{e:loca2}, \eqref{e:loca4}, and~\eqref{e:loca6}
are satisfied. Using~\eqref{e:tube} and~\eqref{e:supsi-1}--\eqref{e:supchi-2}
we see that for $C_0$ large enough,
\begin{equation}
  \label{e:nested}
e^{t_0H_p}(\supp a^{(j)}_u)\cap \supp (1-a^{(j+1)}_u)=\emptyset,\quad
j\in [0,N_0],
\end{equation}
and similarly for $b^{(j)}_u,a^{(j)}_f$.

Next, we prove~\eqref{e:loca1}, \eqref{e:loca3}, and~\eqref{e:loca5}.
The case $j=0$ follows directly from~\eqref{e:basic-2},
\eqref{e:basic-3}, and~\eqref{e:basic-4},
using~\eqref{e:disjoint}
and taking $C_0$ large enough so that (recalling~\eqref{e:base-point},
\eqref{e:supsi-2}, and~\eqref{e:supchi-2})
$\supp a_u\cap \supp(1-a^{(0)}_u)=\emptyset$
and similarly for $b_u,a_f$.

We now argue by induction. Assume that~\eqref{e:loca1}
holds for some~$j\in [0,N_0]$. By~\eqref{e:gorov2},
there exists $\tilde a^{(j)}_u\in S^{\comp}_\rho(T^*M)$ such that
$$
e^{-it_0P_h/h}\Op_h(a^{(j)}_u)e^{it_0P_h/h}=\Op_h(\tilde a^{(j)}_u)+\mathcal O(h^\infty)_{L^2\to L^2}.
$$
Here the constants in the $\mathcal O(h^\infty)$ remainder
are uniform in $j$, since all $S^{\comp}_\rho$ seminorms of $a^{(j)}_u$
are bounded uniformly in $j$; similar reasoning applies to the $\mathcal O(h^\infty)$
remainders below.

Applying $e^{-it_0P_h/h}$ to~\eqref{e:loca1} for $j$, we obtain
$$
e^{-it_0P_h/h} u_j=\Op_h(\tilde a^{(j)}_u)e^{-it_0P_h/h}u_j+\mathcal O(h^\infty)_{L^2}.
$$
We may choose $\tilde a^{(j)}_u$ so that $\supp \tilde a^{(j)}_u\subset e^{t_0H_p}(\supp a^{(j)}_u)$. Then by~\eqref{e:disjoint}, \eqref{e:chitin}, and~\eqref{e:nested}
we see that
$$
\chi e^{-it_0P_h/h} u_j=\Op_h(a^{(j+1)}_u)\chi e^{-it_0P_h/h} u_j+\mathcal O(h^\infty)_{L^2},
$$
therefore~\eqref{e:loca1} holds for $j+1$. Using induction on $j$,
we obtain~\eqref{e:loca1} for all $j\in [0,N_0+1]$, where
it is easy to see that the $\mathcal O(h^\infty)$ remainder is uniform in $j$
since the number of iterations is $\mathcal O(\log(1/h))$.
A similar argument shows that~\eqref{e:loca5} holds for all~$j\in [0,N_0+1]$.

Next, \eqref{e:loca3} for $j\in [0,N_0+1]$
follows by induction on $j$ together with the following estimate:
\begin{equation}
  \label{e:loca3-key}
\|\Op_h(b^{(j)}_u)u_j\|_{L^2}\leq (1+Ch^{1/2-\rho})\|\Op_h(b^{(j+1)}_u)\chi e^{-it_0P_h/h}u_j\|_{L^2}+\mathcal O(h^\infty).
\end{equation}
To show~\eqref{e:loca3-key}, note first that $\chi$ on the right-hand side may be replaced by $1$ by~\eqref{e:chitin}. By~\eqref{e:gorov2}, there exists
$\tilde b^{(j)}_u\in S^{\comp}_\rho(T^*M)$ such that
$$
e^{-it_0P_h/h}\Op_h(b^{(j)}_u)e^{it_0P_h/h}=\Op_h(\tilde b^{(j)}_u)+\mathcal O(h^\infty)_{L^2\to L^2};
$$
moreover, we may assume that $\supp \tilde b^{(j)}_u\subset e^{t_0H_p}(\supp b^{(j)}_u)$.
Then
$$
\begin{aligned}
\|\Op_h(b^{(j)}_u)u_j\|_{L^2}&=
\|e^{-it_0P_h/h}\Op_h(b^{(j)}_u)u_j\|_{L^2}\\
&=\|\Op_h(\tilde b^{(j)}_u)e^{-it_0P_h/h}u_j\|_{L^2}+\mathcal O(h^\infty)\\
&=\|\Op_h(\tilde b^{(j)}_u)\Op_h(b^{(j+1)}_u)e^{-it_0P_h/h}u_j\|_{L^2}+\mathcal O(h^\infty)
\end{aligned}
$$
where the last line above follows from~\eqref{e:disjoint} and the analog
of~\eqref{e:nested} for $b^{(j)}_u$. To prove~\eqref{e:loca3-key},
it remains to use the norm bound
\begin{equation}
  \label{e:normi}
\|\Op_h(\tilde b^{(j)}_u)\|_{L^2\to L^2}\leq 1+Ch^{{1\over 2}-\rho},
\end{equation}
To show~\eqref{e:normi}, we first note that
$\tilde b^{(j)}_u=b^{(j)}_u\circ e^{-t_0H_p}+\mathcal O(h^{1-2\rho})$
and $|b^{(j)}_u|\leq 1$;
therefore, the principal symbol of $\Op_h(\tilde b^{(j)}_u)$
is bounded above by $1+\mathcal O(h^{1-2\rho})$.
Since $t_0$ is small,
$\tilde b^{(j)}_u$ is supported in some coordinate chart on~$M$;
thus it suffices to show the bound
\begin{equation}
  \label{e:normii}
\|\Op_h^0(b)\|_{L^2\to L^2}\leq \sup_{T^*M} |b|+\mathcal O(h^{{1\over 2}-\rho}),\quad
b\in S^{\comp}_\rho(T^*\mathbb R^n)
\end{equation}
where $\Op_h^0$ is defined in~\eqref{e:op-h-0}.
The bound~\eqref{e:normii} follows from~\cite[Theorem~4.23(ii)]{e-z}.

We have proven \eqref{e:loca1}--\eqref{e:loca6} for $j\in [0,N_0+1]$.
The case $j\in [-N_0,0]$ is considered in the same way, using the
metric $\tilde g_-$ instead of $\tilde g_+$ in the definitions
of $a^{(j)}_u,b^{(j)}_u,a^{(j)}_f$ and replacing $e^{-it_0P_h/h}$
by $e^{it_0P_h/h}$, $e^{t_0H_p}$ by $e^{-t_0H_p}$ etc.
in the proofs of~\eqref{e:loca1}, \eqref{e:loca3}, and~\eqref{e:loca5}.
The cases $j\in [0,N_0+1]$ and $j\in [-N_0,0]$
produce different symbols $a^{(0)}_u,b^{(0)}_u,a^{(0)}_f$,
however both options satisfy~\eqref{e:loca1}--\eqref{e:loca6}
so we may choose either one of them. This finishes the proof of Lemma~\ref{l:localization}.

\section{Proof of Theorem~\ref{t:main}}
  \label{s:proof}

To prove the lower norm bound~\eqref{e:main}, we construct
families of functions
$$
\tilde u(x;h)\in C^\infty(M),\quad
\tilde f(x;h)\in C_0^\infty(M)
$$
such that for some $h$-independent constant $C$,
\begin{enumerate}
\item $\tilde u=h R_h(\omega)\tilde f$;%
\footnote{Technically speaking, this only applies when $\omega$ is
not a pole of $R_h$. To show the lower bound~\eqref{e:main}
when $\omega$ is a pole, it suffices to note that this bound holds
in a punctured neighborhood of $\omega$, and thus at $\omega$ as well.}
\item $\tilde f$ is supported inside some $h$-independent compact set;
\item $\|\tilde f\|_{L^2}\leq Ch^{2\sqrt{E}\beta\nu}$;
\item $\|\chi_1 \tilde u\|_{L^2}\geq C^{-1}$ for some $h$-independent
$\chi_1\in C_0^\infty(M)$.
\end{enumerate}
Theorem~\ref{t:main} follows immediately from here; indeed,
if $\chi_2\in C_0^\infty(M)$ is such that $\tilde f=\chi_2\tilde f$
for all $h$, then we find
$$
\|\chi_1 R_h(\omega)\chi_2\|_{L^2\to L^2}\geq
{\|\chi_1 R_h(\omega)\chi_2\tilde f\|_{L^2}\over \|\tilde f\|_{L^2}}
=
h^{-1}{\|\chi_1\tilde u\|_{L^2}\over \|\tilde f\|_{L^2}}\geq C^{-1}h^{-1-2\sqrt{E}\beta\nu}.
$$
The function $\tilde u$ consists of two components. One of them is the long Gaussian
beam $u$ constructed in Lemma~\ref{l:long}; recall that
$u$ is supported inside some $h$-independent compact set and
\begin{equation}
  \label{e:sausage}
(P_h-\omega^2)u=h(f_+-f_-)+\mathcal O(h^\infty)_{L^2},
\end{equation}
where $f_\pm$ are also defined in Lemma~\ref{l:long}. See Figure~\ref{f:total}.
\begin{figure}
\includegraphics{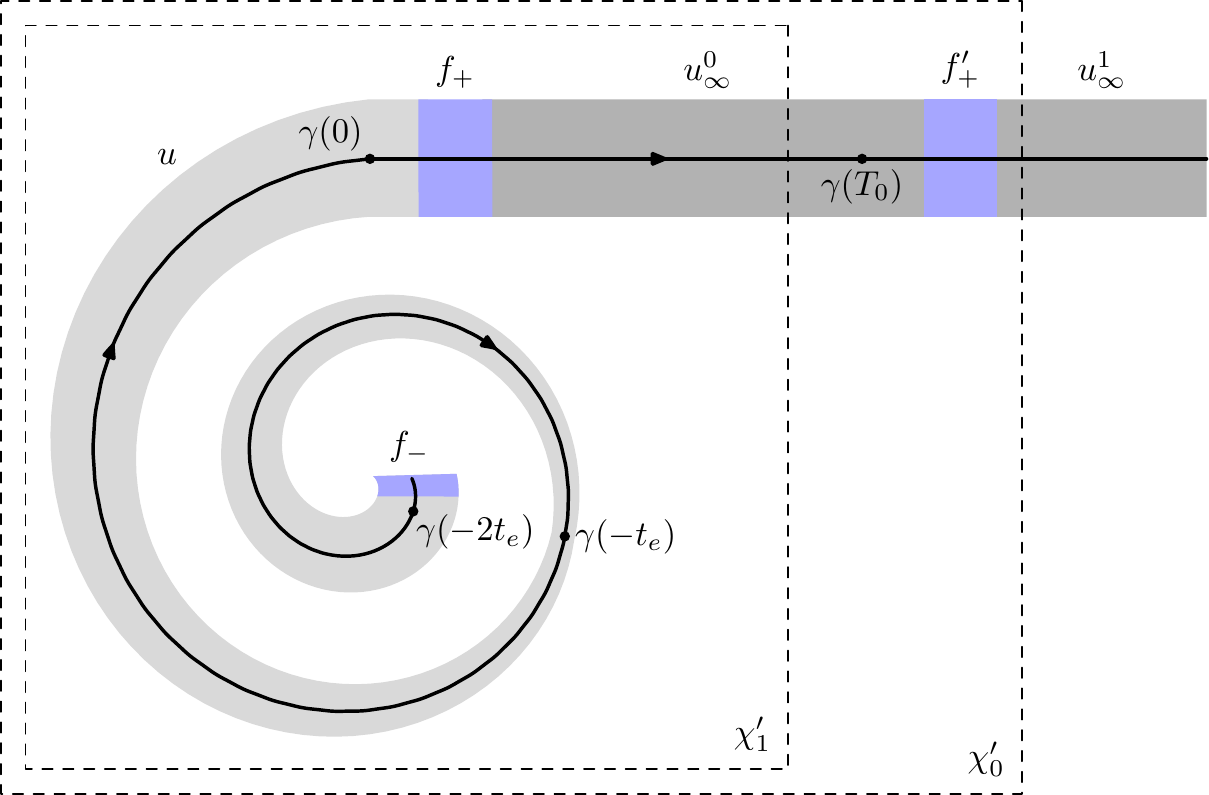}
\caption{Concentration in phase space of the quasimode $\tilde u=u-\chi'_0u^0_\infty-(1-\chi'_1)u^1_\infty$,
showing its components $u$, $u_\infty^0$, $u_\infty^1$,
the functions $f_+,f_-$ from~\eqref{e:sausage}, the propagated
function $f'_+:=e^{-iT_0(P_h-\omega^2)/h}f_+$,
and the supports of the cutoffs $\chi'_0,\chi'_1$. Here
$t_e={\beta\over 2}\log(1/h)$ is just below the Ehrenfest time
of the trajectory $\gamma$.
Our construction is as follows:
starting from a basic beam near $\gamma(-t_e)$, we propagate
it for times in $[-t_e,t_e]$ to obtain $u$; see Figure~\ref{f:localization}.
We next propagate $f_+$ forward for time $T_0$ which
is large enough so that $P_h=-h^2\Delta_{g_0}$ on $\gamma([T_0,\infty))$,
to obtain $u_\infty^0$. We finally apply the free resolvent to $f'_+$
to obtain $u_\infty^1$.}
\label{f:total}
\end{figure}

Since $\|f_-\|_{L^2}\leq Ch^{2\sqrt{E}\beta\nu}$, it remains
to construct a function which compensates for the $f_+$ term in~\eqref{e:sausage}.
This is done by the following
\begin{lemm}
  \label{l:onger}
There exist $h$-dependent families of functions
$$
u_\infty(x;h)\in C^\infty(M),\quad
f_\infty(x;h)\in C_0^\infty(M)
$$
such that for some $h$-independent constants $C,C_\chi$,

1. $u_\infty=hR_h(\omega)f_\infty$.

2. $f_\infty$ is supported inside some $h$-independent compact set.

3. $f_\infty=f_++\mathcal O(h^\infty)_{L^2}$.

4. $\|\chi u_\infty\|_{L^2}\leq C_\chi$ for each $\chi\in C_0^\infty(M)$,
where $C_\chi$ depends on $\chi$.

5. $\WFh(u_\infty)\subset \gamma([t_0/3,\infty))$.
\end{lemm}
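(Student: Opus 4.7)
My plan is to write $u_\infty$ as the sum of two explicit pieces: a Duhamel integral that propagates $f_+$ forward for a finite time $T_0$, and an outgoing free-resolvent piece that carries the solution out to infinity along $\gamma$. Since the trapping assumption~(2) forces $\gamma(t) \to \infty$ as $t \to +\infty$, I fix $T_0 > 0$ large enough that $\gamma([T_0 - t_0, \infty))$ lies inside the Euclidean end of $M$, where $g = g_0$ and $V \equiv 0$, so that $P_h$ agrees there with the free operator $P_h^0 := -h^2\Delta_{g_0}$.

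I then set
\[
u_\infty^0 := i \int_0^{T_0} e^{it\omega^2/h} e^{-itP_h/h} f_+ \, dt, \quad f'_+ := e^{-iT_0(P_h - \omega^2)/h} f_+, \quad u_\infty^1 := hR_h^0(\omega) f'_+,
\]
where $R_h^0(\omega) := (P_h^0 - \omega^2)^{-1}$ is the meromorphic continuation of the Euclidean free resolvent, and where I replace $e^{-itP_h/h}$ (for $t \in [0, T_0]$) by a compactly supported FIO parametrix in $I^{\comp}(e^{tH_p})$, so that $u_\infty^0$ and $f'_+$ become compactly supported modulo an $\mathcal O(h^\infty)_{L^2}$ error I suppress. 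The identity $(P_h - \omega^2)(e^{it\omega^2/h}e^{-itP_h/h}) = ih\,\partial_t(e^{it\omega^2/h}e^{-itP_h/h})$ then yields $(P_h - \omega^2)u_\infty^0 = h(f_+ - f'_+) + \mathcal O(h^\infty)_{L^2}$, and since $\omega^2/h = E/h - 2i\sqrt{E}\nu + \mathcal O(h)$ gives $|e^{it\omega^2/h}| \leq e^{CT_0}$ uniformly on $[0, T_0]$, one has $\|u_\infty^0\|_{L^2} = \mathcal O(1)$. By iterating Egorov's theorem~\eqref{e:gorov2} finitely many times, $\WFh(u_\infty^0) \subset \gamma([t_0/3, T_0 + 2t_0/3])$ and $\WFh(f'_+) \subset \gamma(T_0 + [t_0/3, 2t_0/3])$.

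For the free-resolvent piece, standard local $L^2$ bounds on $R_h^0(\omega)$ in the lower half-plane give $\|\chi u_\infty^1\|_{L^2} \leq C_\chi$ for each $\chi \in C_0^\infty(M)$. Since $f'_+$ has wavefront set on an outgoing piece of $\gamma$ in the Euclidean end and the Euclidean geodesic flow coincides with $e^{tH_p}$ on $\gamma([T_0, \infty))$, propagation of singularities for the outgoing free resolvent yields $\WFh(u_\infty^1) \subset \gamma([T_0 + t_0/3, \infty))$. In particular $u_\infty^1$ is $\mathcal O(h^\infty)$ microlocally on the compact set $K := \{g \neq g_0\} \cup \{V \neq 0\}$, which contains the support of $P_h - P_h^0$. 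Setting $u_\infty := u_\infty^0 + u_\infty^1$ and combining $(P_h^0 - \omega^2)u_\infty^1 = hf'_+$ with the above, I obtain
\[
(P_h - \omega^2)u_\infty = h(f_+ - f'_+) + hf'_+ + (P_h - P_h^0)u_\infty^1 + \mathcal O(h^\infty)_{L^2} = hf_+ + \mathcal O(h^\infty)_{L^2},
\]
where the error is compactly supported. Defining $f_\infty := h^{-1}(P_h - \omega^2)u_\infty = f_+ + \mathcal O(h^\infty)_{L^2}$ gives parts~2 and~3; parts~4 and~5 follow by combining the bounds on $u_\infty^0$ and $u_\infty^1$ obtained above (note that $[t_0/3, T_0 + 2t_0/3] \cup [T_0 + t_0/3, \infty) = [t_0/3, \infty)$); and part~1, $u_\infty = hR_h(\omega) f_\infty$, follows from uniqueness of outgoing solutions, since both summands of $u_\infty$ are outgoing by construction.

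The main technical step is justifying $\WFh(u_\infty^1) \subset \gamma([T_0 + t_0/3, \infty))$ and hence $(P_h - P_h^0)u_\infty^1 = \mathcal O(h^\infty)_{L^2}$. This relies on propagation of singularities for the outgoing free resolvent applied to a source whose wavefront set lies on outgoing Euclidean bicharacteristics, together with the observation that the forward free rays issuing from $\WFh(f'_+)$ are straight lines escaping to infinity and therefore never re-enter the compact set $K$.
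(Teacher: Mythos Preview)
Your approach is the same as the paper's: split $u_\infty$ into a finite-time Duhamel piece $u_\infty^0$ and an outgoing free-resolvent piece $u_\infty^1$, then show the junction errors are $\mathcal O(h^\infty)$ by wavefront-set arguments. The paper, however, inserts explicit spatial cutoffs, writing $u_\infty = \chi'_0 u_\infty^0 + (1-\chi'_1) u_\infty^1$ with $\chi'_0,\chi'_1\in C_0^\infty(M)$, and this is not merely cosmetic. Your $u_\infty^1 = hR_h^0(\omega)f'_+$ is a function on $\mathbb R^n$, not on $M$: the manifold $M$ is obtained from $\mathbb R^n$ by excising $\overline B(0,r_0)$ and gluing in the compact piece $M_{r_0}$, so $u_\infty^1$ has no canonical extension to $M_{r_0}$, and the expression $(P_h-P_h^0)u_\infty^1$ you write does not make literal sense (the two operators are not simultaneously defined anywhere except on the overlap, where they agree). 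The cutoff $(1-\chi'_1)$, vanishing near $M_{r_0}$, is exactly what transplants $u_\infty^1$ onto $M$; the resulting commutator $[P_h,\chi'_1]u_\infty^1$ is then $\mathcal O(h^\infty)$ by your wavefront argument, which is where your observation that the forward free rays never re-enter $K$ is actually used. The fix is minor, but without it parts~2 and~3 and the computation of $f_\infty$ are not well-posed.

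For part~1 the paper argues by analytic continuation: for $\Im\omega>0$ the identities $\chi'_0 u_\infty^0 = R_h(\omega)(P_h-\omega^2)\chi'_0 u_\infty^0$ and $(1-\chi'_1)u_\infty^1 = R_h(\omega)(P_h-\omega^2)(1-\chi'_1)u_\infty^1$ hold because the left-hand sides lie in $L^2(M)$, and one then continues in $\omega$. Your appeal to ``uniqueness of outgoing solutions'' is morally equivalent but leaves unstated what ``outgoing for $P_h$'' means and why it characterizes $R_h(\omega)$; the analytic-continuation route avoids having to formulate that.
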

\begin{proof}
Since $M$ is diffeomorphic to $\mathbb R^n$ outside of a compact set,
we may write for $r_0>0$ large enough,
$$
M=M_{r_0}\sqcup \big(\mathbb R^n\setminus \overline{B}(0,r_0)\big),
$$
where $\overline B(0,r_0)\subset\mathbb R^n$
is the closed Euclidean ball of radius $r_0$
and $M_{r_0}\subset M$ is compact. We choose
$r_0$ such that the potential $V$ is supported in $M_{r_0}$
and $g$ is equal to the Euclidean metric $g_0$ on $\mathbb R^n\setminus\overline B(0,r_0)$;
then
\begin{equation}
  \label{e:same-0}
P_h=P_h^0 \quad\text{on }\mathbb R^n\setminus \overline{B}(0,r_0),
\end{equation}
where $P_h^0$ is the semiclassical Euclidean Laplacian on $\mathbb R^n$:
$$
P_h^0=-h^2\Delta_{g_0}.
$$
Since the trajectory $\gamma(t)$ escapes as $t\to +\infty$,
there exists $T_0>0$ such that
$$
M_{r_0}\cap \gamma\big([T_0,\infty)\big)=\emptyset.
$$
We choose cutoff functions $\chi'_0,\chi'_1\in C_0^\infty(M)$ such that
(viewing them as functions on~$T^*M$ if necessary)
\begin{gather}
  \label{e:cutoff-1}
\chi'_0=1\quad\text{near }\gamma\big((-\infty,T_0+t_0]\big),\\
  \label{e:cutoff-2}
\chi'_1=1\quad\text{near }M_{r_0},\\
  \label{e:cutoff-3}
(\supp\chi'_1)\cap \gamma\big([T_0,\infty)\big)=\emptyset.
\end{gather}
Consider the free resolvent
$$
R^0_h(\omega)=(P_h^0-\omega^2)^{-1}:L^2(\mathbb R^n)\to L^2(\mathbb R^n),\quad
\Im\omega>0;
$$
we continue it meromorphically to a family of operators
(see~\cite[\S3.1]{dizzy} or~\cite[\S7.2]{Vainberg})
$$
R^0_h(\omega):L^2_{\comp}(\mathbb R^n)\to L^2_{\loc}(\mathbb R^n),\quad
\omega\in\mathbb C.
$$
We now define (see Figure~\ref{f:total})
$$
\begin{aligned}
u_\infty&:=\chi'_0 u_\infty^0+(1-\chi'_1)u_\infty^1,\\
u_\infty^0&:=i\int_0^{T_0}e^{-it(P_h-\omega^2)/h}f_+\,dt\ \in\ C^\infty(M),\\
u_\infty^1&:=hR^0_h(\omega)(1-\chi'_1)\chi'_0e^{-iT_0(P_h-\omega^2)/h}f_+\ \in\ C^\infty(\mathbb R^n).
\end{aligned}
$$
Since $\|f_+\|_{L^2}$ is bounded uniformly in $h$,
so are $\|u_\infty^0\|_{L^2}$ and $\|\chi u_\infty^1\|_{L^2}$
for each $\chi\in C_0^\infty(\mathbb R^n)$; the latter follows from
boundedness of the free resolvent $R^0_h$~\cite[Theorem~3.1]{dizzy}.
This proves part~4 of the lemma.

We next claim the following inclusions,
which together imply part~5 of the lemma:
\begin{align}
  \label{e:wfi-1}
\WFh(u_\infty^0)&\ \subset\ \gamma\big([t_0/3,T_0+2t_0/3]\big),\\ 
  \label{e:wfi-2}
\WFh(u_\infty^1)&\ \subset\ \gamma\big([T_0+t_0/3,\infty)\big).
\end{align}
Indeed, by Lemma~\ref{l:long}, $\WFh(f_+)\subset \gamma([t_0/3,2t_0/3])$;
applying~\eqref{e:gorov2}, we obtain
\begin{equation}
  \label{e:wfi-3}
\WFh(e^{-it(P_h-\omega^2)/h}f_+)\ \subset\ \gamma\big([t+t_0/3,t+2t_0/3]\big).
\end{equation}
The inclusion~\eqref{e:wfi-1} follows immediately.
As for~\eqref{e:wfi-2}, it can be deduced from~\eqref{e:wfi-3}
for $t=T_0$ together with the following
outgoing property of the resolvent $R_h^0(\omega)$,
valid for each $h$-tempered family $f\in L^2_{\comp}(\mathbb R^n)$:
\begin{equation}
  \label{e:outgoing}
\WFh(R^0_h(\omega)f)\ \subset\ \bigcup_{t\geq 0}e^{tH_{p_0}}(\WFh(f)),\quad
p_0(x,\xi)=|\xi|_{g_0}^2.
\end{equation}
The inclusion~\eqref{e:outgoing} follows from the oscillatory
integral representation of $R^0_h(\omega)$ as in~\cite[Lemma~3.52]{dizzy} combined with
semiclassical propagation of singularities~\cite[Proposition~3.4]{nhp} for the operator $P_h^0-\omega^2$.

Now, we compute
$$
\begin{aligned}
(P_h-\omega^2)u_\infty^0&=-\int_0^{T_0}h\partial_t e^{-it(P_h-\omega^2)/h}f_+\,dt
=h(f_+-e^{-iT_0(P_h-\omega^2)/h}f_+),\\
(1-\chi'_1)(P_h-\omega^2)u_\infty^1&=
h(1-\chi'_1)^2\chi'_0e^{-iT_0(P_h-\omega^2)/h}f_+,
\end{aligned}
$$
where the last statement follows by~\eqref{e:same-0}.

Put
$$
f_\infty:=h^{-1}(P_h-\omega^2)u_\infty,
$$
then
$$
f_\infty=\chi'_0f_+
+\chi'_0\big((1-\chi'_1)^2-1\big)e^{-iT_0(P_h-\omega^2)/h}f_+
+h^{-1}[P_h,\chi'_0]u_\infty^0
-h^{-1}[P_h,\chi'_1]u_\infty^1.
$$
Part~2 of the lemma follows from here immediately, and part~3 follows
by analysing the terms on the right-hand side:
\begin{itemize}
\item the first term is equal to $f_++\mathcal O(h^\infty)_{L^2}$
by~\eqref{e:cutoff-1} and since $\WFh(f_+)\subset\gamma([t_0/3,2t_0/3])$;
\item the second term is $\mathcal O(h^\infty)_{L^2}$ by~\eqref{e:cutoff-3}
and~\eqref{e:wfi-3} for $t=T_0$;
\item the third term is $\mathcal O(h^\infty)_{L^2}$ by~\eqref{e:cutoff-1}
and~\eqref{e:wfi-1};
\item and the fourth term is $\mathcal O(h^\infty)_{L^2}$ by~\eqref{e:cutoff-3}
and~\eqref{e:wfi-2}.
\end{itemize}
Finally, part~1 follows from the following two statements:
\begin{align}
  \label{e:returns}
\chi'_0 u_\infty^0&=R_h(\omega)(P_h-\omega^2)\chi'_0 u_\infty^0,\\
  \label{e:returns2}
(1-\chi'_1)u_\infty^1&=R_h(\omega)(P_h-\omega^2)(1-\chi'_1)u_\infty^1.
\end{align}
The statement~\eqref{e:returns} follows from the identity
\begin{equation}
  \label{e:returns0}
v=R_h(\omega)(P_h-\omega^2)v,\quad
v\in C_0^\infty(M),
\end{equation}
which holds when $\Im\omega>0$ since
$C_0^\infty(M)\subset L^2(M)$ and $R_h(\omega)$ is the inverse
of $P_h-\omega^2$ on $L^2$, and for general
$\omega$ by analytic continuation.
The statement~\eqref{e:returns2} follows from the identity
$$
(1-\chi'_1)R^0_h(\omega)f=R_h(\omega)(P_h-\omega^2)(1-\chi'_1)R^0_h(\omega)f,\quad
f\in C_0^\infty(\mathbb R^n),
$$
which is true for $\Im\omega>0$ since~$(1-\chi'_1)R^0_h(\omega)f\in L^2(M)$ and for general $\omega$
by analytic continuation; here $(P_h-\omega^2)(1-\chi'_1)R^0_h(\omega)f$ is
compactly supported by~\eqref{e:same-0}.
This finishes the proof of Lemma~\ref{l:onger}.
\end{proof}
We now finish the construction of the functions $\tilde u,\tilde f$
and thus the proof of Theorem~\ref{t:main}. Put
$$
\tilde u:=u-u_\infty,\quad
\tilde f:=h^{-1}(P_h-\omega^2)\tilde u
=h^{-1}(P_h-\omega^2)u-f_\infty.
$$
Note that, since $u\in C_0^\infty(M)$, we have
by~\eqref{e:returns0}
$$
u=R_h(\omega)(P_h-\omega^2)u.
$$
It follows that $\tilde u=hR_h(\omega)\tilde f$.
Also, since both $u$ and $f_\infty$ are supported in some
$h$-independent compact set, so is $\tilde f$. We next have
by~\eqref{e:sausage},
$$
\tilde f=-f_-+\mathcal O(h^\infty)_{L^2}
=\mathcal O(h^{2\sqrt{E}\beta\nu})_{L^2}.
$$
Finally, let $b\in S^{\comp}_\rho(T^*M)$ be the symbol from part~4 of Lemma~\ref{l:long}.
Then
$$
\WFh(\Op_h(b))\subset \gamma([-t_0/4,t_0/4]);
$$
together with part~5 of Lemma~\ref{l:onger}, this implies that
$$
\Op_h(b)u_\infty=\mathcal O(h^\infty)_{L^2}.
$$
Combining this with part~4 of Lemma~\ref{l:long}, we see that
$$
\|\Op_h(b)\tilde u\|_{L^2}\geq C^{-1}.
$$
Since $\Op_h(b)$ is compactly supported in an $h$-independent
set and its $L^2\to L^2$ norm is bounded uniformly in $h$,
we obtain property~(4) of $\tilde u$, finishing the proof.



\begin{thebibliography}{0}

\bibitem[BLR]{BLR} Claude Bardos, Gilles Lebeau, and Jeffery Rauch,
	\emph{Sharp sufficient conditions for the observation, control, and stabilization of waves from the boundary,}
	SIAM J. Control Optim. \textbf{30}(1992), 1024--1065.

\bibitem[BBR]{BBR} Jean-Fran\c cois Bony, Nicolas Burq, and Thierry Ramond,
	\emph{Minoration de la r\'esolvante dans le cas captif,\/}
	C. R. Math. \textbf{348}(2010), 1279--1282.
	
\bibitem[BoPe]{BonyPetkov} Jean-Fran\c cois Bony and Vesselin Petkov,
	\emph{Semiclassical estimates of the cut-off resolvent for trapping perturbations,\/}
	J. Spectr. Theory \textbf{3}(2013), 399--422.
	
\bibitem[BoRo]{RJM} Jean-Marc Bouclet and Julien Royer,
	\emph{Local energy decay for the damped wave equation,\/}
	J. Funct. Anal. \textbf{266}(2014), 4538--4615.
	
\bibitem[BoTo]{Tomilov} Alexander Borichev and Yuri Tomilov,
	\emph{Optimal polynomial decay of functions and operator semigroups,\/}
	Math. Ann. \textbf{347}(2010), 455--478.

\bibitem[BuCh]{Burq-Hans} Nicolas Burq and Hans Christianson,
	\emph{Imperfect geometric control and overdamping for the damped wave equation,\/}
	Comm. Math. Phys. \textbf{336}(2015), 101--130.

\bibitem[BuG\'e]{BG} Nicolas Burq and Patrick G\'erard,
	\emph{Condition n\'ecessaire et suffisante pour la contr\^olabilit\'e exacte des ondes,\/}
	C. R. Acad. Sci. S\'erie I \textbf{325}(1997), 749--752.

	
\bibitem[BuZu]{BZu} Nicolas Burq and Claude Zuily,
	\emph{Concentration of Laplace eigenfunctions and stabilization of weakly damped wave equation,\/}
	preprint, \arXiv{1503.02058}.
	
\bibitem[BuZw]{BZ} Nicolas Burq and Maciej Zworski,
	\emph{Geometric control in the presence of a black box,\/}
	Jour. Amer. Math. Soc. \textbf{17}(2004), 443--471.
	
\bibitem[Ch08]{hans3} Hans Christianson,
	\emph{Dispersive estimates for manifolds with one trapped orbit,\/}
	Comm. PDE \textbf{33}(2008), 1147--1174.
	
\bibitem[Ch09]{hans4} Hans Christianson,
	\emph{Applications of cutoff resolvent estimates to the wave equation,\/}
	Math. Res. Lett. \textbf{16}(2009), 577--590.
	
\bibitem[Ch11]{hans5} Hans Christianson,
	\emph{Quantum monodromy and nonconcentration near a closed semi-hyperbolic orbit,\/}
	Trans. Amer. Math. Soc. \textbf{363}(2011), 3373--3438.
	
\bibitem[ChMe]{hans2} Hans Christianson and Jason Metcalfe,
	\emph{Sharp local smoothing for warped product manifolds with smooth inflection transmission,\/}
	Indiana Univ. Math. J. \textbf{63}(2014), 969--992.
	
\bibitem[ChWu]{hans1} Hans Christianson and Jared Wunsch,
	\emph{Local smoothing for the Schr\"odinger equation with a prescribed loss,\/}
	Amer. J. Math. \textbf{135}(2013), 1601--1632.
		
\bibitem[CoRo]{CR} Monique Combescure and Didier Robert,
	\emph{Semiclassical spreading of quantum wave packets and applications near unstable fixed fixed points of the classical flow,\/}
	Asympt. Anal. \textbf{14}(1997), 377--404.

\bibitem[CoFe]{CF} Antonio C\'ordoba and Charles Fefferman,
	\emph{Wave packets and Fourier integral operators,\/}
	Comm. Partial Differential equations \textbf{3}(1978), 979--1005.
	
\bibitem[DDZ]{resolve} Kiril Datchev, Semyon Dyatlov, and Maciej Zworski,
	\emph{Resonances and lower resolvent bounds,\/}
	to appear in J. Spect. Th., \arXiv{1402.0604}.
	
\bibitem[DKLS]{LS} David Dos Santos Ferreira, Yaroslav Kurylev, Matti Lassas, and Mikko Salo,
	\emph{The Calder\'on problem in transversally anisotropic geometries,\/}
	to appear in J. Eur. Math. Soc., \arXiv{1305.1273}. 

\bibitem[Dy11]{xpd} Semyon Dyatlov,
	\emph{Exponential energy decay for Kerr--de Sitter black holes beyond event horizons,\/}
	Math. Res. Lett. \textbf{18}(2011), 1023--1035.

\bibitem[Dy14]{gaps} Semyon Dyatlov,
	\emph{Spectral gaps for normally hyperbolic trapping,\/}
	to appear in Ann. Inst. Fourier, \arXiv{1403.6401}.

\bibitem[Dy15]{nhp} Semyon Dyatlov,
	\emph{Resonance projectors and asymptotics for $r$-normally hyperbolic trapped sets,\/}
	J. Amer. Math. Soc. \textbf{28}(2015), 311--381.

\bibitem[DyGu]{qeefun} Semyon Dyatlov and Colin Guillarmou,
	\emph{Microlocal limits of plane waves and Eisenstein functions,\/}
	Ann. de l'ENS (4) \textbf{47}(2014), 371--448.
	
\bibitem[DyZa]{hgap} Semyon Dyatlov and Joshua Zahl,
	\emph{Spectral gaps, additive energy, and a fractal uncertainty principle,\/}
	preprint, \arXiv{1504.06589}.

\bibitem[DyZw]{dizzy} Semyon Dyatlov and Maciej Zworski,
	\emph{Mathematical theory of scattering resonances,\/}
	book in progress,
	\url{http://math.mit.edu/~dyatlov/res/}
	
\bibitem[EsNo]{NS} Suresh Eswarathasan an St\'ephane Nonnenmacher,
	\emph{Strong scarring of logarithmic quasimodes},
	preprint; \arXiv{1507.08371}.
	
\bibitem[GrSj]{Grigis-Sjostrand} Alain Grigis and Johannes Sj\"ostrand,
	\emph{Microlocal analysis for differential operators: an introduction,\/}
	Cambridge University Press, 1994.
	
\bibitem[Ha]{HA} George Hagedorn,
	\emph{Semiclassical quantum mechanics IV. Large order asymptotics and more general states in more than one dimension,\/}
	Ann. Inst. H. Poincar\'e Phys. Th\'eor. \textbf{42}(1985), 363--374. 
	
\bibitem[H\"oIII]{Hormander3} Lars H\"ormander,
	\emph{The Analysis of Linear Partial Differential Operators III. Pseudo-Differential Operators,\/}
	Springer, 1994.
	
\bibitem[KeSa]{KS} Carlos Kenig and Mikko Salo,
	\emph{The Calder\'on problem with partial data on manifolds and applications,\/}
	Analysis\&PDE \textbf{6}(2013), 2003--2048. 

\bibitem[LSV]{Dima} Ari Laptev, Yuri Safarov, and Dmitry Vassiliev,
	\emph{On global representation of Lagrangian distributions and solutions of hyperbolic equations,\/}
	Comm. Pure Appl. Math. \textbf{47}(1994), 1411--1456.

\bibitem[L\'eLe]{LL} Matthieu L\'eautaud and Nicolas Lerner,
	\emph{Energy decay for a locally undamped wave equation,\/}
	preprint, \arXiv{1411.7271}.

\bibitem[Le]{L} Gilles Lebeau,
	\emph{Equation des ondes amorties,\/}
	In A. Boutet de Monvel and V. Marchenko, editors,
	Algebraic and Geometric Methods in Mathematical Physics, 73--109,
	Kluwer Academic, The Netherlands, 1996.

\bibitem[LeRo]{LR} Gilles Lebeau and Luc Robbiano,
	\emph{Stabilisation de l'\'equation des ondes par le bord,\/}
	Duke Math. J. \textbf{86}(1997), 465--491.

\bibitem[NSZ]{nsz} Shu Nakamura, Plamen Stefanov, and Maciej Zworski,
	\emph{Resonance expansions of propagators in the presence of potential barriers,\/}
	J. Funct. Anal. \textbf{205}(2003), 180--205.

\bibitem[NoZw]{NonnenmacherZworskiInv} St\'ephane Nonnenmacher and Maciej Zworski,
	\emph{Decay of correlations for normally hyperbolic trapping,\/}
	Invent. Math. \textbf{200}(2015), 345--438.

\bibitem[Po]{Pop} M. M. Popov,
	\emph{A new method of computation of wave fields using Gaussian beams,\/}
	Wave Motion, \textbf{4}(1982), 85--97.

\bibitem[Ra69]{Ralston} James Ralston,
	\emph{Solutions of the wave equation with localized energy,\/}
	Comm. Pure Appl. Math. \textbf{22}(1969), 807--823.

\bibitem[Ra82]{Ralston-notes} James Ralston,
	\emph{Gaussian beams and the propagation of singularities,\/}
	Studies in partial differential equations, 206--248;
	MAA Stud. Math. \textbf{23}(1982).

\bibitem[Sj]{Sjostrand-trace} Johannes Sj\"ostrand,
	\emph{A trace formula and review of some estimates for resonances,\/}
	Microlocal Analysis and Spectral Theory NATO ASI Series \textbf{490}(1997), 377--437.

\bibitem[TaZw]{TangZworski} Siu-Hung Tang and Maciej Zworski,
	\emph{From quasimodes to resonances,\/}
	Math. Res. Lett. \textbf{5}(1998), 261--272.

\bibitem[Va]{Vainberg} Boris Vainberg,
	\emph{Asymptotic methods in equations of mathematical physics,\/}
	Gordon and Breach, 1988.

\bibitem[Va13a]{Vasy-AH1} Andr\'as Vasy,
	\emph{Microlocal analysis of asymptotically hyperbolic and Kerr--de Sitter spaces,\/}
	with an appendix by Semyon Dyatlov,
	Invent. Math. \textbf{194}(2013), 381--513.

\bibitem[Va13b]{Vasy-AH2} Andr\'as Vasy,
	\emph{Microlocal analysis of asymptotically hyperbolic spaces and high energy resolvent estimates,\/}
	Inverse Problems and Applications. Inside Out II, Gunther Uhlmann (ed.),
	MSRI publications \textbf{60}, Cambridge Univ. Press, 2013.

\bibitem[WuZw]{WunschZworski} Jared Wunsch and Maciej Zworski,
	\emph{Resolvent estimates for normally hyperbolic trapped sets,\/}
 	Ann. Henri Poincar\'e, \textbf{12}(2011), 1349--1385.

\bibitem[Zw]{e-z} Maciej Zworski,
    \emph{Semiclassical analysis,\/}
    Graduate Studies in Mathematics \textbf{138}, AMS, 2012.

\end{thebibliography}
\end{document}